\newcommand\numberthis{\addtocounter{equation}{1}\tag{\theequation}}
\renewcommand{\env@cases}[1][@{}l@{\quad}l@{}]{%
  \let\@ifnextchar\new@ifnextchar
  \left\lbrace
  \def\arraystretch{1.2}%
  \array{#1}%
}
\def\nbx{{\mathbf{x}}}
\def\nby{{\mathbf{y}}}
\def\nb0{{\mathbf{0}}}
\def\nb1{{\mathbf{1}}}
\def\ncalB{{\mathcal{B}}}
\def\ncalI{{\mathcal{I}}}
\newtheorem{lemma}{Lemma}
\newtheorem{thm}{Theorem}
\newtheorem{definition}{Definition}
\newtheorem{theorem}{Theorem}
\newtheorem{cor}{Corollary}
\newtheorem{remark}{Remark}
\def\E{\mathbb{E}}
\def\R{\mathbb{R}}
\newcommand{\mytheta}{\boldsymbol{\theta}}
\newcommand{\myalpha}{\boldsymbol{\alpha}}
\begin{document}
\graphicspath{{./Figures/}}
\title{Distance from the Nucleus to a Uniformly \newline Random Point in the 0-cell and the Typical Cell of the Poisson-Voronoi Tessellation}
\author{
Praful D. Mankar, Priyabrata Parida, Harpreet S. Dhillon, Martin Haenggi
\thanks{P. D. Mankar, P. Parida and H. S. Dhillon are with Wireless@VT, Bradley Department of Electrical and Computer Engineering, Virginia Tech, Blacksburg, VA. Email: \{prafuldm, pparida, hdhillon\}@vt.edu. M. Haenggi is with the Department of Electrical Engineering, University of Notre Dame, Notre Dame, 
IN, Email: mhaenggi@nd.edu. This work was supported by the US National Science Foundation under Grant ECCS-1731711. 
}
}

\maketitle

\begin{abstract}
Consider the distances $\tilde{R}_o$  and $R_o$ from the nucleus to a uniformly random point in the 0-cell and the typical cell, respectively, of the $d$-dimensional Poisson-Voronoi (PV) tessellation. The main objective of this paper is to characterize the exact distributions of $\tilde{R}_o$ and $R_o$. First, using the well-known relationship between the 0-cell and the typical cell, we show that the random variable $\tilde{R}_o$ is equivalent in distribution to the contact distance of the Poisson point process. 
Next, we derive a multi-integral expression for the exact distribution of $R_o$. Further, we derive a closed-form approximate expression for the distribution of $R_o$, which is the contact distribution with a mean corrected by a factor equal to the ratio of the mean volumes of the 0-cell and the typical cell. An additional outcome of our analysis is a direct proof of the well-known spherical property of the PV cells having a large inball.
\end{abstract}
\begin{keywords}
Poisson point process, Poisson-Voronoi tessellation, typical cell, 0-cell, distance distribution.
\end{keywords}        
\section{Introduction}\label{sec:Intro} 

The Poisson point process (PPP) has found many applications in science and engineering due to its useful mathematical properties. Several of these applications specifically focus on the Poisson-Voronoi (PV) tessellation \cite{moller1989random}, which partitions space into disjoint {\em cells} whose nuclei are the points of the PPP. There is a rich literature focused on characterizing the statistical properties of the PV tessellation, such as the distributions of the contact and chord lengths \cite{muche1992contact}, the distributions of the radii of the circumcircle and the incircle of the 0-cell and the typical cell \cite{Calka2002}, the distribution of the number of edges of the typical cell \cite{calka2003explicit}, the limiting shape of the 0-cell and the typical cell \cite{hug2004large}, and the relationship between the 0-cell and the typical cell \cite{MECKE1999}. 
However, it is quite surprising to note that the distributions of the distances from the nucleus to uniformly random points in the 0-cell and the typical cell of the $d$-dimensional PV tessellation have not  yet been investigated, which is the main goal of this paper.

The motivation behind our investigation comes from wireless networks, where the PPP has been extensively used to model the locations of cell towers (also called base stations) in cellular networks such that the service region of each cell tower is simply the PV cell with the corresponding cell tower at its nucleus \cite{BacBla2009,AndBacJ2011,DhiGanJ2012,Haenggi2013,blaszczyszyn_haenggi_keeler_mukherjee_2018}. 
If one assumes mobile users to be distributed uniformly at random in the service region of each cell tower (a popular model used by the wireless networks community), one of the crucial steps towards the performance characterization of this network is to understand the distribution of the distance between a mobile user and its associated cell tower. In the PV tessellation, this corresponds to the distribution of the distance of the nucleus of a PV cell to a point chosen uniformly at random from that cell \cite{Haenggi2017,Praful_TypicalCell_letter}. Note that while it is sufficient to focus on the $2$-dimensional case from the wireless networks perspective, all the mathematical results presented in this paper are for the general $d$-dimensional case. With this brief introduction, we now formally define the problem of interest for this paper. 

Let $\Phi\triangleq\{\textbf{x}_1,\textbf{x}_2,\dots\}$ be a homogeneous PPP with intensity $\lambda$ on $\mathbb{R}^d$. The PV cell with the nucleus at $\textbf{x}\in\Phi$ is defined as
\begin{equation}
V_\mathbf{x}=\{\mathbf{y}\in\mathbb{R}^d\mid \|\mathbf{y}-\mathbf{x}\|\leq\|\mathbf{x'}-\mathbf{y}\|, ~\forall \mathbf{x'}\in\Phi\},~~~~ \mathbf{x}\in\Phi.
\label{eq:PV_Cell_Definition}
\end{equation}
The set $\{V_\textbf{x}\}_{\textbf{x}\in\Phi}$ is known as the {\em PV tessellation}. For any (deterministic) $\textbf{y}\in\mathbb{R}^d$, almost surely there exists a unique nucleus $\mathbf{x}\in\Phi$  such that $\textbf{y}\in V_\textbf{x}$ \cite{Moller2012lectures}.  The PV cell containing the origin $o$  is called the {\em 0-cell} and is denoted by $\tilde{V}_o$. 
The statistical properties of the {\em typical cell} can be characterized using Palm theory, which formalizes the notion of conditioning on the presence of a point of a point process at a specific location. Since by Slivnyak's theorem, conditioning on a point is the same as adding a point to a PPP, we consider that the nucleus of the typical cell of the point process $\Phi\cup\{o\}$ is $o$, which is given by
\begin{equation}
V_o=\{\mathbf{y}\in\mathbb{R}^d\mid \|\mathbf{y}\|\leq\|\mathbf{x}-\mathbf{y}\|, ~\forall \mathbf{x}\in\Phi\}.
\end{equation}
Now, using $\tilde{V}_o$ and $V_o$, we define the main random variables of interest for this paper.
\begin{definition} 
Let $\tilde{R}_o$ denote the distance from the nucleus to a uniformly random point in the 0-cell $\tilde{V}_o$.
\end{definition}
\begin{definition} 
Let $R_o$ denote the distance from the nucleus to a uniformly random point in the typical cell $V_o$.
\end{definition}

We focus on the statistical characterization of $R_o$ and $\tilde{R}_o$ for  the PPP with intensity $\lambda$. We derive the cumulative distribution function (CDF) of $\tilde{R}_o$ and $R_o$ in Sections \ref{sec:CroftonCell} and \ref{sec:TypicalCell}, respectively. In Section \ref{sec:CroftonCell}, a closed-form expression for the exact CDF of $\tilde{R}_o$ is derived based on the formula on the relationship between the 0-cell and the typical cell given in \cite{BacBla2009,MECKE1999}. It is well-known that the statistical properties of $R_o$ are hard to characterize for the case of $d>1$. Before going into the $d>1$ case, we discuss the case of $d=1$ in Section \ref{subsec:1D} for which the distribution of $R_o$ is far easier to characterize. In Section \ref{sec:GeneralCase_CDF_R},  we present an analytical approach to derive the distribution of $R_o$ for the $d>1$ case based on the analysis of the temporal evolution of the PV structure presented in \cite{Pineda}. 
We also approximate the CDF of $R_o$ using a simple expression in Section \ref{sec:Approximate_CDF}. Therein, we also characterize the distribution of $R_o$ as $d$ tends to infinity. In addition, based on the formulation developed in Section \ref{sec:TypicalCell}, we provide a simpler proof for the well-known spherical nature of large PV cells in Section \ref{sec:LimitingShape}.

\section{Distribution of $\tilde{R}_o$}
\label{sec:CroftonCell}
In this section, we derive a closed-form expression for the CDF of  the distance from the nucleus to uniformly random point in the 0-cell $\tilde{V}_o$. It is well-known that the expected volume of the 0-cell is greater than the expected volume of the typical cell. In fact, all the moments of the volume of the 0-cell are known to be greater than the moments of the volume of the typical  cell \cite{MECKE1999}. This is quite intuitive as the origin (or, for that matter, any {\em fixed} point) is more likely to lie in a bigger cell. 

Before presenting the CDF of $\tilde{R}_o$, we state the relationship of the distributions of the 0-cell and the typical cell  from \cite[Corollary 4.2.4]{BacBla2009} as 
\begin{equation}
\mathbb{E}[f(\tilde{V}_o)]=\lambda\mathbb{E}^{o}[\upsilon_d(V_o)f(V_o)],
\label{eq:Relation_Vo_V}
\end{equation}
where $\upsilon_d$ is the Lebesgue measure in $d$-dimensions, $\E^{o}$ is the expectation with respect to the Palm distribution, and $f$ is any translation-invariant non-negative function on compact sets. 
We will use this expression along with an appropriately chosen function $f$ to derive the CDF of $\tilde{R}_o$ in Theorem~\ref{thm:CDF_R_Crofton}.
Let $\mathcal{B}_r(\mathbf{x})$ represent the $d$-dimensional ball of radius $r$ centered at $\mathbf{x}$. 
Let $X$ be a random set in $\mathbb{R}^d$. Using the results of \cite{robbins1944} and \cite{robbins1945}, the $n$-th moment of the volume of $X$ can be evaluated as
\begin{equation}
\mathbb{E}[\upsilon_d(X)^n]=\int_{\mathbb{R}^d}\dots\int_{\mathbb{R}^d}\mathbb{P}(x_1,\dots, x_n\in X){\rm d}x_1\dots{\rm d}x_n.
\label{eq:Moment_Random_Set}
\end{equation} 
Next, we restate a useful result from \cite[Lemma 4.2]{alishahi2008} on the mean volume of $\ncalB_r(o)\cap V_o$, which directly follows from \eqref{eq:Moment_Random_Set}.
\begin{lemma}
\label{lemma:Mean_IntBall_PVCell}
For the homogeneous PPP with intensity $\lambda$ on $\mathbb{R}^d$, the mean volume of the intersection of the ball $\mathcal{B}_r(o)$ with the typical cell $V_o$ is given by
\begin{equation}
\mathbb{E}^o[\upsilon_d(\mathcal{B}_r(o)\cap V_o)]=\frac{1}{\lambda}\left(1-\exp(-\lambda\kappa_d r^d)\right),
\label{eq:Mean_Intersection}
\end{equation}
where $\kappa_d=\frac{\pi^{\frac{d}{2}}}{\Gamma(\frac{d}{2}+1)}$ is the volume of the unit-radius ball in $\mathbb{R}^d$.
\end{lemma}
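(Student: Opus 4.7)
The plan is to apply the Robbins-type moment formula \eqref{eq:Moment_Random_Set} with $n=1$ to the random closed set $X = \mathcal{B}_r(o) \cap V_o$. Because $\mathcal{B}_r(o)$ is deterministic, its indicator pulls out of the probability and the identity becomes
\begin{equation*}
\mathbb{E}^o[\upsilon_d(\mathcal{B}_r(o) \cap V_o)] = \int_{\mathcal{B}_r(o)} \mathbb{P}^o(\mathbf{y} \in V_o)\, \mathrm{d}\mathbf{y},
\end{equation*}
so the task reduces to computing, for a fixed $\mathbf{y}$, the Palm probability that $\mathbf{y}$ lies in the typical cell.

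Next I would invoke Slivnyak's theorem, which says that under the Palm distribution at $o$ the remainder $\Phi$ is still a homogeneous PPP of intensity $\lambda$. The event $\{\mathbf{y} \in V_o\}$ is equivalent to $\|\mathbf{y}\| \leq \|\mathbf{y}-\mathbf{x}\|$ for every $\mathbf{x}\in\Phi$, i.e.\ to the void event $\Phi \cap \mathcal{B}_{\|\mathbf{y}\|}(\mathbf{y}) = \emptyset$. The PPP void probability then gives $\mathbb{P}^o(\mathbf{y}\in V_o) = \exp(-\lambda \kappa_d \|\mathbf{y}\|^d)$, and substituting yields
\begin{equation*}
\mathbb{E}^o[\upsilon_d(\mathcal{B}_r(o) \cap V_o)] = \int_{\mathcal{B}_r(o)} \exp\bigl(-\lambda \kappa_d \|\mathbf{y}\|^d\bigr)\, \mathrm{d}\mathbf{y}.
\end{equation*}

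To finish I would switch to spherical coordinates, using that the surface area of the sphere of radius $t$ in $\mathbb{R}^d$ is $d\kappa_d t^{d-1}$, and make the substitution $u = \lambda \kappa_d t^d$; the integral then collapses to $\int_{0}^{\lambda\kappa_d r^d} \tfrac{1}{\lambda} e^{-u}\,\mathrm{d}u$, which is exactly the right-hand side of \eqref{eq:Mean_Intersection}. There is no real technical obstacle; the only step that merits care is noticing that under Palm at $o$ the condition $\mathbf{y}\in V_o$ becomes a void probability on a ball centered at $\mathbf{y}$ (not at $o$) with radius $\|\mathbf{y}\|$, and that Slivnyak's theorem is what legitimizes computing this void probability against the unconditional intensity $\lambda$.
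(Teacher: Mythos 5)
Your proposal is correct and follows essentially the same route as the paper's proof: apply the Robbins moment formula with $n=1$, restrict the integral to $\mathcal{B}_r(o)$, identify $\mathbb{P}^o(\mathbf{y}\in V_o)$ as the void probability of the ball $\mathcal{B}_{\|\mathbf{y}\|}(\mathbf{y})$ (justified by Slivnyak), and convert to polar coordinates. The paper's version is merely terser, leaving the Slivnyak step and the final substitution implicit.
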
 
\begin{proof}
Using \eqref{eq:Moment_Random_Set}, the first moment of the volume of intersection of $\mathcal{B}_r(o)$ with the typical cell $V_o$ can be determined as
\begin{align*}
\mathbb{E}^o[\upsilon_d(\mathcal{B}_r(o)\cap V_o)]&=\int\limits_{\mathbb{R}^d}\mathbb{P}\left(x\in \mathcal{B}_r(o)\cap V_o\right){\rm d}x=\int\limits_{\mathbb{R}^d\cap \mathcal{B}_r(o)}\mathbb{P}\left(x \in V_o\right){\rm d}x\stackrel{(a)}{=}d\kappa_d\int_{0}^r\exp(-\lambda\kappa_d v^d)v^{d-1}{\rm d}v.
\end{align*}
where (a) follows by the change of Cartesian coordinates to polar coordinates and the void probability of the homogeneous PPP. 
\end{proof}
Now, we present the CDF of $\tilde{R}_o$ using the result given in Lemma \ref{lemma:Mean_IntBall_PVCell}.
\begin{thm}
\label{thm:CDF_R_Crofton}
For the homogeneous PPP with intensity $\lambda$ on $\mathbb{R}^d$, the CDF of the distance $\tilde{R}_o$ from the nucleus to a uniformly random point in the 0-cell $\tilde{V}_o$ is 
\begin{equation}
F_{\tilde{R}_o}(r)=1-\exp\left(-\lambda\kappa_d r^d\right),~~~r\geq 0.
\label{eq:CDF_R_Crofton}
\end{equation}
\end{thm}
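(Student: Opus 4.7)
The plan is to apply the $0$-cell/typical-cell relationship \eqref{eq:Relation_Vo_V} to a carefully chosen translation-invariant functional and then reduce the resulting expression to Lemma~\ref{lemma:Mean_IntBall_PVCell}. Write $\mathbf{x}^\star$ for the nucleus of $\tilde{V}_o$, i.e., the closest point of $\Phi$ to the origin. Conditioned on the realization of $\tilde{V}_o$, the uniformly sampled point lies within distance $r$ of $\mathbf{x}^\star$ with probability $\upsilon_d(\mathcal{B}_r(\mathbf{x}^\star)\cap\tilde{V}_o)/\upsilon_d(\tilde{V}_o)$, so taking expectations yields
\[
F_{\tilde{R}_o}(r)=\mathbb{E}\!\left[\frac{\upsilon_d(\mathcal{B}_r(\mathbf{x}^\star)\cap\tilde{V}_o)}{\upsilon_d(\tilde{V}_o)}\right].
\]

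Next I would introduce the functional $f(V)=\upsilon_d(\mathcal{B}_r(\mathbf{x}(V))\cap V)/\upsilon_d(V)$, where $\mathbf{x}(V)$ denotes the nucleus of the cell $V$. Any rigid translation $V\mapsto V+y$ shifts $\mathbf{x}(V)$ by the same vector while preserving Lebesgue measure, so $f$ is translation-invariant in the sense required by \eqref{eq:Relation_Vo_V}. Applying that identity and using that the nucleus of $V_o$ sits at the origin under $\mathbb{E}^{o}$, I obtain
\[
F_{\tilde{R}_o}(r)=\lambda\,\mathbb{E}^{o}\!\left[\upsilon_d(V_o)\cdot\frac{\upsilon_d(\mathcal{B}_r(o)\cap V_o)}{\upsilon_d(V_o)}\right]=\lambda\,\mathbb{E}^{o}\!\left[\upsilon_d(\mathcal{B}_r(o)\cap V_o)\right],
\]
after which Lemma~\ref{lemma:Mean_IntBall_PVCell} cancels the prefactor $\lambda$ and delivers exactly $1-\exp(-\lambda\kappa_d r^d)$.

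The main subtlety is really just bookkeeping for translation invariance: one must keep the ball centered at the cell's own nucleus (not at the origin) on the $0$-cell side, so that $f$ depends only on the shape of the cell relative to its nucleus and the hypothesis of \eqref{eq:Relation_Vo_V} is satisfied. Once this is in place, the proof reduces to a one-line substitution, and the result carries the pleasant interpretation that $\tilde{R}_o$ is equal in distribution to the contact distance of $\Phi$ from the origin, which makes intuitive sense because size-biasing by volume (passing from the typical cell to the $0$-cell) exactly compensates for the $1/\upsilon_d$ factor coming from uniform sampling inside the cell.
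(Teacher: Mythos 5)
Your proposal is correct and follows essentially the same route as the paper: express $F_{\tilde{R}_o}(r)$ as the expected volume fraction $\upsilon_d(\mathcal{B}_r(\mathbf{x}_o)\cap\tilde{V}_o)/\upsilon_d(\tilde{V}_o)$, apply the 0-cell/typical-cell identity \eqref{eq:Relation_Vo_V} to this functional so the denominator cancels, and conclude via Lemma~\ref{lemma:Mean_IntBall_PVCell}. Your explicit verification that the functional is translation-invariant (by centering the ball at the cell's own nucleus) is a point the paper leaves implicit, but it is not a different argument.
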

\begin{proof}
Let $\mathbf{x}_o$ represent the nucleus of $\tilde{V}_o$ and let $\textbf{y}$ represent the uniformly distributed point in $\tilde{V}_o$. 
We note that the distance $\tilde{R}_o=\|\mathbf{x}_o-\mathbf{y}\|$ is less than $r$ when $\mathbf{y}$ lies in the  intersection of the ball $\mathcal{B}_r(\mathbf{x}_o)$ and $\tilde{V}_o$.
Therefore, the CDF of $\tilde{R}_o$ can be written as
\begin{align*}
F_{\tilde{R}_o}(r)&=\mathbb{P}(\tilde{R}_o\leq r)=\mathbb{E}\left[\frac{\upsilon_d(\mathcal{B}_r(\mathbf{x}_o)\cap \tilde{V}_o)}{\upsilon_d(\tilde{V}_o)}\right].
\end{align*}
Now, we define the function $f$ of the PV cell $V_\mathbf{x}$ as the ratio of the volumes of $\mathcal{B}_r(\mathbf{x})\cap V_\textbf{x}$ and $V_\mathbf{x}$. Thus, the function $f$ for the 0-cell and the typical cell, respectively, becomes 
\begin{align*}
f(\tilde{V}_o)=\frac{\upsilon_d(\mathcal{B}_r(\mathbf{x}_o)\cap \tilde{V}_o)}{\upsilon_d(\tilde{V}_o)}~~~\text{and}~~~f(V_o)=\frac{\upsilon_d(\mathcal{B}_r(o)\cap V_o)}{\upsilon_d(V_o)}.
\end{align*}
By substituting the above function in \eqref{eq:Relation_Vo_V}, we obtain
\begin{align*}
\mathbb{E}\left[\frac{\upsilon_d(\mathcal{B}_r(\mathbf{x}_o)\cap \tilde{V}_o)}{\upsilon_d(\tilde{V}_o)}\right]=\lambda\mathbb{E}^{o}[\upsilon_d(\mathcal{B}_r(o)\cap V_o)].
\end{align*}
Finally, we arrive at \eqref{eq:CDF_R_Crofton} by substituting the result of Lemma \ref{lemma:Mean_IntBall_PVCell} in the above equation.  
\end{proof}
Using Theorem \ref{thm:CDF_R_Crofton}, we can calculate the $n$-th moment of the distance $\tilde{R}_o$.
\begin{cor}
\label{cor:Mean_Ro}
For the homogeneous PPP with intensity $\lambda$ on $\mathbb{R}^d$, the $n$-th moment of the distance $\tilde{R}_o$ from the a nucleus to uniformly random point in the 0-cell $\tilde{V}_o$ is 
\begin{align}
\E[\tilde{R}_o^n]=\frac{\Gamma\left(1+\frac{n}{d}\right)}{\left(\lambda\kappa_d\right)^{\frac{n}{d}}}.
\label{eq:Mean_Ro_crofton}
\end{align}
\end{cor}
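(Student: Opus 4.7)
The plan is to invoke the tail-integral formula for the moments of a non-negative random variable together with the closed-form CDF just established in Theorem~\ref{thm:CDF_R_Crofton}. Since $\tilde{R}_o \geq 0$ almost surely, I would write
\begin{align*}
\E[\tilde{R}_o^n] = \int_0^\infty n r^{n-1}\,\mathbb{P}(\tilde{R}_o > r)\,{\rm d}r,
\end{align*}
and then substitute $\mathbb{P}(\tilde{R}_o > r) = \exp(-\lambda\kappa_d r^d)$ from \eqref{eq:CDF_R_Crofton}. This converts the problem into evaluating the single integral $\int_0^\infty n r^{n-1}\exp(-\lambda\kappa_d r^d)\,{\rm d}r$.

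To finish, I would apply the change of variables $t = \lambda\kappa_d r^d$, so that $r = (t/(\lambda\kappa_d))^{1/d}$ and $n r^{n-1}\,{\rm d}r = \frac{n}{d}(\lambda\kappa_d)^{-n/d}\, t^{n/d-1}\,{\rm d}t$. The integral then collapses to
\begin{align*}
\E[\tilde{R}_o^n] = \frac{n}{d\,(\lambda\kappa_d)^{n/d}}\int_0^\infty t^{n/d-1} e^{-t}\,{\rm d}t = \frac{n}{d\,(\lambda\kappa_d)^{n/d}}\,\Gamma\!\left(\tfrac{n}{d}\right),
\end{align*}
and the identity $\frac{n}{d}\Gamma(n/d) = \Gamma(1+n/d)$ yields the claimed expression.

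There is no real obstacle here: once Theorem~\ref{thm:CDF_R_Crofton} is in hand, the CDF of $\tilde{R}_o$ is exactly that of a Weibull random variable with shape parameter $d$ and scale parameter $(\lambda\kappa_d)^{-1/d}$, whose moments are standard. The only bookkeeping to be careful about is combining the powers of $\lambda\kappa_d$ correctly after the substitution so that the final exponent is $-n/d$.
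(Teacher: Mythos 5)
Your proof is correct and follows the standard route the paper implicitly relies on: the corollary is stated without proof as an immediate consequence of Theorem~\ref{thm:CDF_R_Crofton}, and your tail-integral computation with the substitution $t=\lambda\kappa_d r^d$ is exactly the routine calculation being omitted. The Weibull identification and the final exponent bookkeeping are both right.
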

\begin{remark}
Using the void probability, the distribution of the distance between the origin and the nucleus of $\tilde{V}_o$, {\em say $\mathbf{x}_o$}, can be simply determined as $\mathbb{P}(\|\mathbf{x}_o\|\leq r)=1-\exp(- \lambda\kappa_dr^d)$. However, it does not reveal any information about how the origin is distributed in the 0-cell. While one can intuitively expect the origin to be uniformly distributed in $\tilde{V}_o$, there does not appear to be a straightforward way to prove this. Using \eqref{eq:Relation_Vo_V}, we have presented a simple construction to establish that the distribution of the origin in $\tilde{V}_o$ is in fact that of a uniformly random point in $\tilde{V}_o$.
\end{remark}
In the next section, we present our approach to the exact evaluation of the CDF of $R_o$.  

\section{Distribution of $R_o$} \label{sec:TypicalCell}
We first characterize the CDF of $R_o$ for $d=1$ where the typical cell is completely characterized by the locations of the nearest points on either side of its nucleus. This allows us to explicitly describe the uniformly distributed point in the typical cell $V_o$ and, in turn, determine the CDF of $R_o$. 
In contrast, the structure of the typical cell for $d>1$ is more complex, which makes the distribution of $R_o$ far more difficult to determine, as will be demonstrated in Section \ref{sec:GeneralCase_CDF_R}. 
\subsection{Distribution of $R_o$ for $d=1$}
\label{subsec:1D}
Let $\Phi\triangleq\{x_1,x_2,\dots\}$ be a homogeneous PPP with intensity $\lambda$ on $\mathbb{R}$. Let $x\in\Phi_l\cap\R^-$ and $y\in\Phi_l\cap\R^+$ be left and right neighboring points of the origin (i.e., nucleus of $V_o$), respectively. Since $\Phi$ is a PPP, $|x|$ and $|y|$ are i.i.d.~random variables that follow an exponential distribution with mean $\lambda^{-1}$.
Denote by $R_1=\frac{1}{2}|x|$ and $R_2=\frac{1}{2}|y|$ the distances to the boundary points of the typical cell $V_o$.
Since $|x|$ and $|y|$ are i.i.d.,~ $R_1$ and $R_2$ are also i.i.d.~and follow exponential distribution with parameter $2\lambda$.
Let $\tilde{R}_{1}=\min(R_1,R_2)$ and $\tilde{R}_{2}=\max(R_1,R_2)$. The joint probability density function (pdf) of $\tilde{R}_{1}$ and $\tilde{R}_{2}$ is \cite[Chapter 2]{Mohammad2013}
\begin{align}
f_{\tilde{R}_{1}, \tilde{R}_{2}}(r_1, r_2) = 8\lambda^2 \exp\left(-2\lambda\left(r_1 + r_2\right)\right), \quad 0 \leq r_1\leq r_2.
\label{eq:JointPDF_1D}
\end{align}
The  distribution of the distance $R_o$ from the nucleus to a uniformly random point in the typical cell $V_o$ conditioned on $\tilde{R}_{1}$ and $\tilde{R}_{2}$ is 
\begin{equation}
\mathbb{P}(R_o \leq r\mid \tilde{R}_{1} = r_1,\tilde{R}_{2} = r_2) = \begin{cases} \frac{2r}{r_1 + r_2}, & 0 \leq r \leq r_1 \\
\frac{r + r_1}{r_1 + r_2}, &  r_1 < r \leq r_2 \\
1, &  r_2 < r. \\
\end{cases}
\label{eq:Cond_CDF_1D}
\end{equation}
By deconditioning the above expression with respect to the joint distribution of $\tilde{R}_{1}$ and $\tilde{R}_{2}$, the CDF of $R_o$ is presented in the following theorem.
\begin{figure}
 \centering
\includegraphics[width=.55\textwidth]{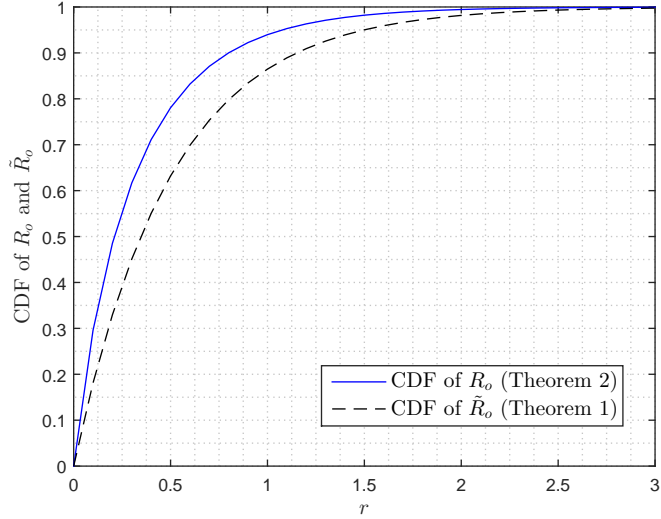}\vspace{-.3cm}
\caption{CDF of $R_o$ and $\tilde{R}_o$ for a unit-intensity Poisson point  process for $d=1$. 
} 
\label{fig:PDF_R_1DPPP}
\end{figure}
\begin{theorem}\label{thm:CDF_R_1D}
For the homogeneous PPP with intensity $\lambda$ on $\mathbb{R}$, the CDF of the distance $R_o$ from the nucleus to a uniformly random point in the typical cell $V_o$ is 
\begin{align}
F_{R_o}(r)&=1-\exp(-2\lambda r)+2\lambda r\exp(-2\lambda r)-4\lambda^2r^2\mathtt{E}_1(2\lambda r),~~ r> 0,\label{eq:CDF_R_1D}
\end{align}
where $\mathtt{E}_1(z)=\int_z^\infty \frac{1}{t}\exp(-t){\rm d}t$ is the exponential integral function.
\end{theorem}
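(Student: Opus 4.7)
The plan is to decondition the piecewise conditional CDF in (\ref{eq:Cond_CDF_1D}) against the joint density (\ref{eq:JointPDF_1D}). Since the three branches of the conditional CDF correspond to whether $r$ lies below $\tilde{R}_1$, between $\tilde{R}_1$ and $\tilde{R}_2$, or above $\tilde{R}_2$, the double integral splits cleanly as $F_{R_o}(r) = F_A + F_B + F_C$, where $F_A$ integrates $1$ over $\{r_2 \le r\}$, $F_B$ integrates $(r+r_1)/(r_1+r_2)$ over $\{r_1 \le r < r_2\}$, and $F_C$ integrates $2r/(r_1+r_2)$ over $\{r < r_1 \le r_2\}$. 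The piece $F_A$ is immediate: it is $\P(\tilde{R}_2 \le r) = (1-e^{-2\lambda r})^2$, the CDF of the maximum of two i.i.d.\ rate-$2\lambda$ exponentials. The substantive work lies in $F_B$ and $F_C$.

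For $F_C$, the plan is to do the $r_2$-integration first under the substitution $s=r_1+r_2$, and then rescale to identify the inner integral as $\mathtt{E}_1(4\lambda r_1)$. The remaining outer integral $\int_r^\infty \mathtt{E}_1(4\lambda r_1)\,{\rm d}r_1$ succumbs to a single integration by parts, using the identity $\frac{{\rm d}}{{\rm d}u}\mathtt{E}_1(au) = -e^{-au}/u$, yielding a closed form in $r\,\mathtt{E}_1(4\lambda r)$ and $e^{-4\lambda r}$. For $F_B$, the same substitution $s=r_1+r_2$ turns the inner integral into $\mathtt{E}_1(2\lambda(r+r_1))$, and a further change of variable $u = r+r_1$ reduces $F_B$ to $8\lambda^2\int_r^{2r} u\,\mathtt{E}_1(2\lambda u)\,{\rm d}u$. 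Integrating by parts by pairing $u\,{\rm d}u$ against $\mathtt{E}_1(2\lambda u)$ produces primitives involving $u^2\mathtt{E}_1(2\lambda u)$ and $e^{-2\lambda u}$; evaluation at $u=r$ and $u=2r$ then brings both $\mathtt{E}_1(2\lambda r)$ and $\mathtt{E}_1(4\lambda r)$ into the expression for $F_B$.

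The final step is to add the three pieces and simplify. The pivotal cancellation I expect is that all $\mathtt{E}_1(4\lambda r)$ contributions cancel between $F_B$ and $F_C$, along with the accompanying $\lambda r\,e^{-4\lambda r}$ terms, while the remaining $e^{-4\lambda r}$ in $F_A$ is annihilated by a matching contribution from $F_B$. What survives is precisely $1 - e^{-2\lambda r} + 2\lambda r\,e^{-2\lambda r} - 4\lambda^2 r^2\,\mathtt{E}_1(2\lambda r)$, i.e., (\ref{eq:CDF_R_1D}). The main obstacle is arithmetic rather than conceptual: each of $F_B$ and $F_C$ spawns several $\mathtt{E}_1$ and exponential terms, and the cancellations require careful tracking of signs and coefficients through the two integrations by parts; a sign error in either would leave spurious $\mathtt{E}_1(4\lambda r)$ terms and obscure the clean form of the answer.
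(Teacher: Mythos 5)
Your proposal is correct and follows essentially the same route as the paper's proof: deconditioning \eqref{eq:Cond_CDF_1D} against \eqref{eq:JointPDF_1D}, splitting into the same three regions, substituting $s=r_1+r_2$ to produce exponential-integral terms, and integrating by parts; your intermediate expressions and the claimed cancellations of the $\mathtt{E}_1(4\lambda r)$ and $e^{-4\lambda r}$ terms all check out against the paper's Appendix~\ref{app:CDF_Ro_d1_Integrals}. The only difference is cosmetic: your change of variable $u=r+r_1$ handles the middle piece in one integration by parts, whereas the paper splits it into two sub-integrals ($\mathtt{Int}_{21}$, $\mathtt{Int}_{22}$) before recombining.
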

\begin{proof}
Using the expression for the conditional CDF of $R_o$ given in \eqref{eq:Cond_CDF_1D} and the joint pdf of $\tilde{R}_1$ and $\tilde{R}_2$ given in \eqref{eq:JointPDF_1D}, the CDF of $R_o$ can be written as   
\begin{align*}
F_{R_o}(r)=&\int_0^r\int_0^{r_2}8\lambda^2\exp(-2\lambda(r_1+r_2)){\rm d}r_1{\rm d}r_2  +\int_r^\infty\int_0^{r}\frac{r+r_1}{r_1+r_2}8\lambda^2\exp(-2\lambda(r_1+r_2)){\rm d}r_1{\rm d}r_2 \\
&+\int_r^\infty\int_r^{r_2}\frac{2r}{r_1+r_2}8\lambda^2\exp(-2\lambda(r_1+r_2)){\rm d}r_1{\rm d}r_2. \numberthis
\label{eq:CDF_Ro_d1_Integrals}
\end{align*}
Further, using some mathematical simplifications, we obtain the result in  \eqref{eq:CDF_R_1D}. Please refer to Appendix \ref{app:CDF_Ro_d1_Integrals} for more details on the manipulation of the integrals in \eqref{eq:CDF_Ro_d1_Integrals}. 
\end{proof}
In Fig.~\ref{fig:PDF_R_1DPPP}, we provide the plots for the CDFs of $R_o$ and $\tilde{R}_o$. From the figure, it can be seen that the distance $\tilde{R}_o$ stochastically dominates the distance $R_o$. In Section \ref{sec:Approximate_CDF}, we will demonstrate that this difference between the distributions of $\tilde{R}_o$ and $R_o$ diminishes with increasing $d$.
\subsection{Distribution of $R_o$ for $d>1$} 
\label{sec:GeneralCase_CDF_R}
Similar to the distribution of $R_o$ for $d=1$ being derived by conditioning on the nuclei of the neighboring PV cells in Section \ref{subsec:1D}, here we derive the distribution of $R_o$ for $d>1$ by conditioning on the points in a hypersphere centered at the origin such that it includes the nuclei of all neighboring PV cells of $V_o$. We refer to the conditional positions of points in the sphere as the {\em domain configuration}. 
The domain configuration enables the characterization of the shape and size of the PV cell $V_o$ which will be useful in the evaluation of the conditional distribution of $R_o$.   
A similar construction is presented in \cite{Pineda,pineda2008temporal} to study the temporal evolution of the volume of the domain size and free boundary distributions for a PV transformation$^1$ for $d=\{1,2,3\}$\footnote{The simultaneously growing sets of randomly distributed nuclei (realized through PPP) at equal isotropic rate is referred to as the {\em PV transformation}. These sets eventually transform into the PV cells.}. 
In the following subsection, we define the domain configuration and discuss its use for the conditional PV cell characterization.
\subsubsection{Domain Configuration}
First, we define the domain configuration and obtain its probability. Next, we discuss its connection with the conditional shape and size of the PV cell $V_o$.
\begin{definition}
For $\ell>0$, we define the set $\mathcal{C}_\ell^k$ as the set of $k$ points with polar coordinates $(l_i,\mytheta_i)$ such that 
\begin{equation}
\mathcal{C}_\ell^k\equiv\frac{1}{2}\{\Phi\cap \mathcal{B}_{2\ell}(o)\mid \Phi(\mathcal{B}_{2\ell}(o))=k\}.
\label{eq:Domain_Confg_Def}
\end{equation} 
where $l_i$ is the radial coordinate and $\mytheta_i=[\theta_{1i},\dots,\theta_{(d-1)i}]$ are the angular coordinates.
\end{definition}
Thus, the point $\tilde{\textbf{x}}_i\triangleq(l_i,\mytheta_i)\in\mathcal{C}_\ell^k$ bisects the line segment joining $o$ and $\textbf{x}_i \in \Phi\cap \mathcal{B}_{2\ell}(o)$. 
By construction, $l_i\in[0,\ell]$, $ \theta_{(d-1)i}\in[0,2\pi)$ and $\theta_{1i},\dots, \theta_{(d-2)i}\in[0,\pi]$. 
Henceforth, the set  ${\mathcal{C}}_\ell^k$ is referred to as the {\em domain configuration}.  
Since $\Phi$ is a PPP, conditioned  on $\Phi(\mathcal{B}_{2\ell}(o))=k$, the points $\textbf{x}_i\in\Phi\cap \mathcal{B}_{2\ell}(o)$, for $i\in\{1,\dots,k\}$, are distributed uniformly at random independently of each other in $\mathcal{B}_{2\ell}(o)$. Consequently, the $k$ points $\{\tilde{\textbf{x}}_i\}_{i=1}^k$ forming the domain configuration $\mathcal{C}_\ell^k$ are also distributed uniformly at random independently of each other in $\mathcal{B}_\ell(o)$. Using this fact, we can express the pdf of the domain configuration as done next.
 
The differential volume element in $d$ dimensions in polar coordinates is~\cite{blumenson1960derivation}
 $$\Delta=v^{d-1}\sin^{d-2}(\alpha_{1})\dots\sin(\alpha_{d-2}){\rm d}v{\rm d}\alpha_{1}\dots{\rm d}\alpha_{d-1}.$$
Thus, the probability that a point distributed uniformly at random in $\mathcal{B}_\ell(o)$ lies in an infinitesimal region with volume $\Delta_i$ such that $v_i\leq \ell$ is equal to $\frac{\Delta_i}{\kappa_d \ell^d}$. Now, we obtain the pdf of the configuration $\mathcal{C}_\ell^k$ conditioned on $\Phi(\mathcal{B}_l(o))=k$ as
\begin{align}
\mathbb{P}((l_1,\mytheta_1)\in\Delta_1,&\dots,(l_k,\mytheta_k)\in\Delta_k;\ell)\stackrel{(a)}{=}\prod_{i=1}^k\mathbb{P}((l_i,\mytheta_i)\in\Delta_i)\nonumber \\
&\stackrel{(b)}{=}\prod\limits_{i=1}^k \frac{1}{\kappa_d \ell^d}v_i^{d-1}\sin^{d-2}(\alpha_{1i})\dots\sin(\alpha_{(d-2)i}){\rm d}v_i{\rm d}\alpha_{1i}\dots{\rm d}\alpha_{(d-1)i}, ~~\text{for}~0\leq v_i\leq \ell,
\label{eq:Conf_Prob}
\end{align}
where (a) follows from the independence of the elements of $\mathcal{C}_\ell^k$ and (b) follows from the uniform distribution of elements of $\mathcal{C}_\ell^k$ in $\mathcal{B}_\ell(o)$.
\begin{figure}
\centering
\includegraphics[trim={10cm 8cm 8cm 5cm},width=.9\textwidth]{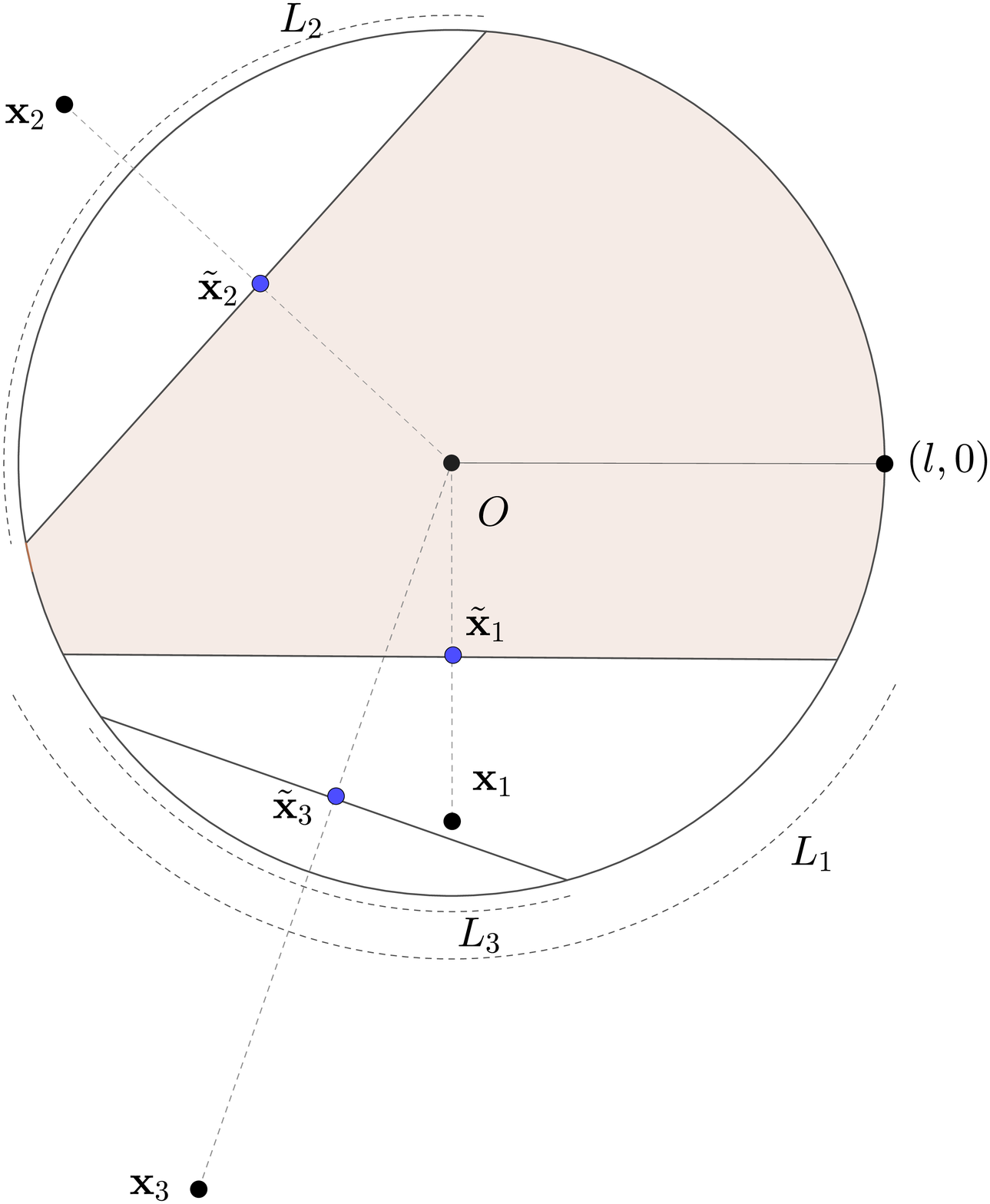}\vspace{-.1cm}
\caption{Illustration of $V_o(\mathcal{C}_\ell^3)\cap \mathcal{B}_\ell(o)$ for $d=2$.}
\label{fig:Illustration}
\end{figure}

\subsubsection{Connections with the Typical Cell}  For an empty domain configuration $\mathcal{C}_\ell^0$, $\mathcal{B}_\ell(o)$ is contained in the typical cell $V_o$. However, a non-empty domain configuration, i.e., $\mathcal{C}_\ell^k$ for $k>0$, contains the mid-points of the chords of $\mathcal{B}_\ell(o)$ formed by the intersection of the edges of typical cell $V_o$ with $\mathcal{B}_\ell(o)$. In addition, the line segments connecting these mid-points to the origin are perpendicular to the corresponding edges. Therefore, the domain configuration provides useful information about the structure of $V_o$. We denote by $V_o(\mathcal{C}_\ell^k)$ the typical cell conditioned on the domain configuration $\mathcal{C}_\ell^k$. 
As $k \rightarrow \infty$, it is easy to see that $V_o(\mathcal{C}_\ell^k)$ becomes deterministic.
However, for any finite $k$, $V_o(\mathcal{C}_\ell^k)$ is in general random because some of its edges may be defined by points of $\Phi$ lying outside $\mathcal{B}_{2\ell}(o)$. That said, conditioning on $\mathcal{C}_\ell^k$ is sufficient to uniquely determine the intersection of $V_o(\mathcal{C}_\ell^k)$ and the ball $\mathcal{B}_\ell(o)$. 
Fig.~\ref{fig:Illustration} illustrates the intersection of the $\mathcal{B}_\ell(o)$ with the cell $V_o(\mathcal{C}_\ell^3)$ for $d=2$.

Let us define $H_{\textbf{x}}$ as the half-space formed by the points in $\R^d$ that are closer to the point $\mathbf{x}\in\Phi$ than the origin, i.e.,
\begin{equation}
H_{\textbf{x}}\triangleq\{\mathbf{y}\in\R^d\mid \|\mathbf{y}-\mathbf{x}\|< \|\mathbf{y}\|\}.
\end{equation}
Now, we denote by $L_i$ the surface (in $d-1$ dimensions) of the spherical cap of $\mathcal{B}_\ell(o)$ such that
\begin{equation}
L_i\triangleq H_{\textbf{x}_i}\cap \partial \mathcal{B}_\ell(o),
\label{eq:Arc_DomainCong}
\end{equation}
where $\partial \mathcal{B}_\ell(o)$ is the boundary of $\mathcal{B}_\ell(o)$.
Note that the surface of the spherical cap is the arc of a circle for $d=2$. From the above definition, it is clear that the point $\tilde{\mathbf{x}}_i\in\mathcal{C}_\ell^k$ is the nearest equidistant point to the origin and $\mathbf{x}_i$ that lies on the supporting hyperplane of $H_{\textbf{x}_i}$. Further, the point $\tilde{\textbf{x}}_i$ is also the center of the $(d-1)$-dimensional chord that forms $L_i$.
This is illustrated in Fig.~\ref{fig:Illustration} for $d=2$.
Now, since $\{\tilde{\textbf{x}}_i\}_{i=1}^k$ are distributed uniformly at random in $\mathcal{B}_\ell(o)$ independently of each other, the corresponding surfaces of the spherical caps $\{L_i\}_{i=1}^k$ have i.i.d.~surface areas\footnote{The surface area in this case is the Lebesgue measure in $d-1$ dimensions.} and are placed uniformly at random on $\partial \mathcal{B}_\ell(o)$. As will be evident in the sequel, this construction will allow us to establish useful conditional geometric properties of the PV cell such as the volume of the intersection of the ball with the PV cell, the conditional distribution of uniformly distributed points within the PV cell, and the shape of large PV cells. We will now use this construction to derive the distribution of $R_o$.
\subsubsection{Distance Distribution} For a given domain configuration $\mathcal{C}_\ell^k$, we define 
\begin{align}
g_k(r;\mathcal{C}_\ell^k) =\upsilon_d(V_o(\mathcal{C}_\ell^k)\cap \mathcal{B}_{r}(o)),
\end{align}
for $0\leq r\leq \ell$, as the volume of the intersection of $\mathcal{B}_{r}(o)$ and cell $V_o(\mathcal{C}_\ell^k)$. As discussed before, $V_o(\mathcal{C}_\ell^k)$ is the typical cell conditioned on the domain configuration $\mathcal{C}_\ell^k$. 
\begin{figure}
\centering
\includegraphics[trim={3cm 1cm 3cm 1cm},width=.9\textwidth]{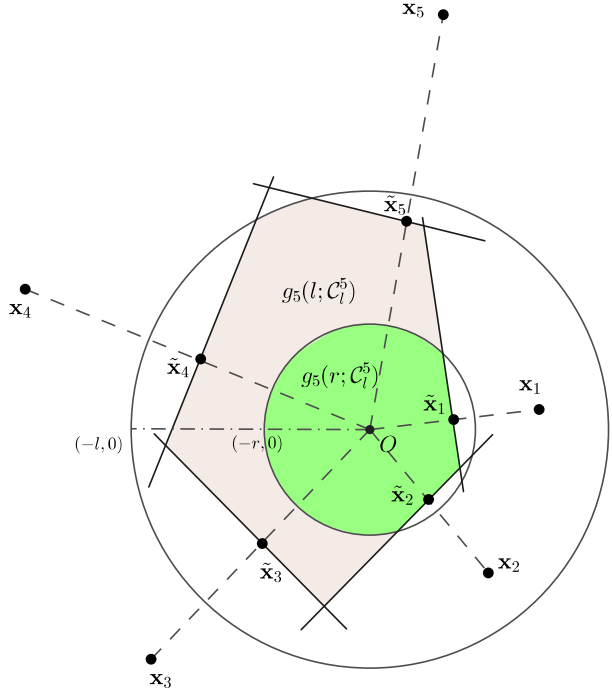}\vspace{-.3cm}
\caption{Illustration of $g_5(r;\mathcal{C}_\ell^5)$ and $g_5(\ell;\mathcal{C}_\ell^5)$ for $d=2$.}
\label{fig:g_k_r}
\end{figure}
\begin{definition} 
Let $R_\ell$ denote the distance from the nucleus of $V_o$ (i.e., the origin) to a uniformly random point in $V_o\cap \mathcal{B}_\ell(o)$. 
\end{definition}
The first main goal is to characterize the CDF of $R_\ell$. Since for $\ell \rightarrow \infty$, $V_o\subset \mathcal{B}_\ell(o)$, the CDF of $R_o$ will  simply be
\begin{align}
F_{R_o} (z) = \lim_{\ell\to\infty} \mathbb{P}(R_\ell \leq z). 
\label{eq:LimitingCase_Dist_of_R}
\end{align}

We first characterize the CDF of $R_\ell$ conditioned on the domain configuration $\mathcal{C}_\ell^k$. This conditional CDF of $R_\ell$ can be expressed as
\begin{align}
F_{R_\ell}(r;\mathcal{C}_\ell^k)&=\frac{\upsilon_d(V_o(\mathcal{C}_\ell^k)\cap \mathcal{B}_{r}(o))}{\upsilon_d(V_o(\mathcal{C}_\ell^k)\cap \mathcal{B}_\ell(o))}=\frac{ g_k(r;\mathcal{C}_\ell^k)}{ g_k(\ell;\mathcal{C}_\ell^k)}, ~~0 \leq r \leq \ell.
\label{eq:CDR_R_Cond_confg}
\end{align} 
Fig. \ref{fig:g_k_r} provides the visual interpretation of $g_k(r,\mathcal{C}_\ell^k)$ and $g_k(l,\mathcal{C}_\ell^k)$ for the typical cell for $d=2$.
The region $g_k(r;\mathcal{C}_\ell^k)$ is shaded in green and the region $g_k(\ell;\mathcal{C}_\ell^k)$ is shaded in brown for $k=5$. Naturally, our next goal is to characterize $g_k(\cdot;\mathcal{C}_\ell^k)$ for which we use $\{L_i\}_{i=1}^k$ given by \eqref{eq:Arc_DomainCong}.

Define the index set $\ncalI(r)$ as the collection of indices $i$ for which $l_i\leq r$. This set points to the collection of the points $\tilde{\textbf{x}}_i$ of the domain configuration that lie inside $\mathcal{B}_r(o)$. It is easy to see that $\cup_{i\in \ncalI(r)} L_i$ represents the portion of $\partial \mathcal{B}_r(o)$ that is outside the typical cell $V_o(\mathcal{C}_\ell^k)$. This can be seen easily from Fig.~\ref{fig:g_k_r} for $d=2$, where the arcs on $\mathcal{B}_r(o)$ corresponding to $\tilde{\textbf{x}}_1\equiv(l_1, \theta_1)$ and $\tilde{\textbf{x}}_2\equiv(l_2, \theta_2)$ do not lie in the cell. Using this insight, we will explicitly characterize the portion of $\partial \mathcal{B}_r(o)$ that lies in $V_o(\mathcal{C}_\ell^k)$, which will then be used to derive the CDF of $R_{\ell}$. This evaluation requires a careful consideration of the overlaps between the surfaces of the spherical caps $\{L_i\}_{i \in \ncalI(r)}$.

Let $\textbf{y}\triangleq(r,\myalpha)$ be the point on the $\partial\mathcal{B}_r(o)$, where $\myalpha = [\alpha_1, \alpha_2, \ldots, \alpha_{d-1}]$.
The Euclidean distance between $\nby\in\partial\mathcal{B}_r(o)$ and $\nbx_i \triangleq (2l_i, \mytheta_i)\in\Phi$ is 
\begin{align*}
d_2(\nby, \nbx_i) = \sqrt{\sum_{n=1}^d\left(y_n - x_{ni}\right)^2},
\end{align*}
where 
\begin{flalign*}
\begin{aligned}
x_{ni} =  
\begin{dcases}
2 l_i \cos(\theta_{1i}); & n= d, \\
2 l_i \prod_{j=1}^{n} \sin(\theta_{ji}) \cos(\theta_{ni}); & 1<n < d, \\
2 l_i \prod_{j=1}^{n-1} \sin(\theta_{ji}); & n = d ,
\end{dcases}
\end{aligned}
~~~~~\text{and}~~~~~
\begin{aligned}
y_{n} =  
\begin{dcases}
r \cos(\alpha_{1}); & n = 1, \\
r \prod_{j=1}^{n} \sin(\alpha_{j}) \cos(\alpha_{n}); & n < d, \\
r \prod_{j=1}^{n-1} \sin(\alpha_{j}); & n = d.\hspace{1cm}
\end{dcases}
\end{aligned}
\end{flalign*}
Let $D_i(l_i,\mytheta_i,\textbf{y})$ be the indicator function taking value 1 if the  point $\mathbf{y}\notin L_i$ or $l_i>r$ (the second condition basically means that $i \notin \ncalI(r)$), otherwise takes value 0. 
Consequently, the points on $\partial \mathcal{B}_r(o)$ that lie in the typical cell $V_o(\mathcal{C}_\ell^k)$ have to be outside of $\{L_i\}_{i=1}^k$. Therefore, we define 
\begin{equation}
D_i\left(l_i,\mytheta_i,\mathbf{y}\right)\triangleq\begin{cases}
\mathbbm{1}\left(d_2(\nby, (2l_i, \mytheta_i)) > r\right);& \text{for~}i \in \ncalI(r) \\
1; &\text{for~}i \notin \ncalI(r). 
\end{cases}
\label{eq:Indicator_Fun}
\end{equation} 
Let $\mathbb{D}=[0,2\pi)\times[0,\pi]^{d-2}$. Using  \eqref{eq:Indicator_Fun}, we can now express the portion of $\partial \mathcal{B}_r(o)$ that belongs to the typical cell $V_o(\mathcal{C}_\ell^k)$ as 
\begin{align}
&\int_{\mathbb{D}}\prod\limits_{i=1}^k D_i\left(l_i,\mytheta_i,\textbf{y}\right)\Delta(\myalpha){\rm d}\myalpha=\frac{1}{r^{d-1}}\upsilon_d(\partial \mathcal{B}_r(o)\cap V_o(\mathcal{C}_\ell^k)),\nonumber
\end{align}
where $\Delta(\myalpha)=\sin^{d-2}(\alpha_{1})\times\dots\times\sin(\alpha_{d-2})$.
Note that $\prod_{i=1}^{k}D_i\left(l_i,\mytheta_i,\mathbf{y}\right)$ is 1 at all points ${\textbf{y}}$, such that $0\leq r\leq z$, lying inside of $\mathcal{B}_z(o)\cap V_o(\mathcal{C}_\ell^k)$, and 0 elsewhere. Thus, the integration of $\prod_{i=1}^{k} D_i\left(l_i,\mytheta_i,\mathbf{y}\right)$ over all the points $\mathbf{y}\in\mathcal{B}_z(o)$ gives the value of $g_k(z;\mathcal{C}_\ell^k)$ for the given domain configuration, i.e.,
\begin{equation}
g_k(z;\mathcal{C}_\ell^k)=\int_{\mathbb{D}}\int_{r=0}^z\prod\limits_{i=1}^k D_i\left(l_i,\mytheta_i,\mathbf{y}\right)r^{d-1}\Delta(\myalpha){\rm d}r{\rm d}\myalpha. 
\label{eq:Conditional_Volume_of_Int_Bz_Vo}
\end{equation}
Using the above results, we present the distance distribution of a uniformly distributed point in $V_o\cap \mathcal{B}_\ell(o)$ conditioned on $\Phi(\mathcal{B}_{2\ell}(o))=k$ in the following lemma. Note that in this lemma, we condition on the number of points that form the domain configuration but not on their locations. 
Let $\mathbf{y}_i=(u_i,\myalpha_i)$ and $\tilde{\mathbb{D}}^d=[0,\ell]\times[0,2\pi)\times[0,\pi]^{d-2}$.
\begin{lemma}
\label{lemma:CONd_CDF}
For given $\ell$, the CDF of $R_\ell$ conditioned on $\Phi(\mathcal{B}_{2\ell}(o))=k$ is 
\begin{align}
F_{R_\ell}(z;k)=\int\limits_{\left(\tilde{\mathbb{D}}^d\right)^k}\frac{ g_k(z;(u_1,\myalpha_1),\dots,(u_k,\myalpha_k))}{g_k(\ell;(u_1,\myalpha_1),\dots,(u_k,\myalpha_k))}\prod\limits_{i=1}^k \frac{1}{\kappa_d \ell^d}u_i^{d-1}\Delta(\myalpha_i){\rm d}\mathbf{y}_i.
 \label{eq:CDF_cond_K}
 \end{align}
 where $g_k(z;(u_1,\myalpha_1),\dots,(u_k,\myalpha_k))$ is given by \eqref{eq:Conditional_Volume_of_Int_Bz_Vo}.
\end{lemma}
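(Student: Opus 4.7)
The plan is to obtain the unconditional CDF $F_{R_\ell}(z;k)$ by first writing the CDF conditioned on the full domain configuration $\mathcal{C}_\ell^k = \{(l_1,\mytheta_1),\dots,(l_k,\mytheta_k)\}$ (which has already been identified as the ratio $g_k(z;\mathcal{C}_\ell^k)/g_k(\ell;\mathcal{C}_\ell^k)$ in \eqref{eq:CDR_R_Cond_confg}), and then averaging out over the random configuration using its joint pdf given in \eqref{eq:Conf_Prob}. In other words, by the tower property,
\begin{align*}
F_{R_\ell}(z;k) = \mathbb{E}\!\left[\,\mathbb{P}(R_\ell \leq z \mid \mathcal{C}_\ell^k,\, \Phi(\mathcal{B}_{2\ell}(o)) = k)\,\Big|\, \Phi(\mathcal{B}_{2\ell}(o)) = k\right],
\end{align*}
where the inner probability is the ratio $g_k(z;\mathcal{C}_\ell^k)/g_k(\ell;\mathcal{C}_\ell^k)$ since, conditioned on $\mathcal{C}_\ell^k$, the intersection $V_o(\mathcal{C}_\ell^k)\cap\mathcal{B}_\ell(o)$ is deterministic and $R_\ell$ is the distance from $o$ to a uniformly random point in this region.

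Next, I would justify that the joint pdf of the configuration needed for the outer expectation is exactly the product density appearing in the lemma. By Slivnyak/elementary PPP properties, conditional on $\Phi(\mathcal{B}_{2\ell}(o))=k$, the $k$ points of $\Phi$ inside $\mathcal{B}_{2\ell}(o)$ are i.i.d.\ uniform on $\mathcal{B}_{2\ell}(o)$. Since $\mathcal{C}_\ell^k$ is obtained by scaling each such point by $1/2$ (cf.\ \eqref{eq:Domain_Confg_Def}), the elements $\tilde{\mathbf{x}}_i=(l_i,\mytheta_i)$ are i.i.d.\ uniform on $\mathcal{B}_\ell(o)$. Expressed in spherical coordinates with the volume element $\Delta = v^{d-1}\sin^{d-2}(\alpha_1)\cdots\sin(\alpha_{d-2})\,{\rm d}v\,{\rm d}\alpha_1\cdots{\rm d}\alpha_{d-1}$, this yields exactly the product density $\prod_{i=1}^k \frac{1}{\kappa_d\ell^d}u_i^{d-1}\Delta(\myalpha_i)$ of \eqref{eq:Conf_Prob}.

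Combining these two steps, the outer expectation becomes the integral
\begin{align*}
F_{R_\ell}(z;k) = \int\limits_{(\tilde{\mathbb{D}}^d)^k} \frac{g_k(z;(u_1,\myalpha_1),\dots,(u_k,\myalpha_k))}{g_k(\ell;(u_1,\myalpha_1),\dots,(u_k,\myalpha_k))} \prod_{i=1}^k \frac{1}{\kappa_d\ell^d} u_i^{d-1}\Delta(\myalpha_i)\,{\rm d}\mathbf{y}_i,
\end{align*}
which is the desired expression, with the integrand $g_k(z;\cdot)$ defined explicitly through the indicator construction in \eqref{eq:Conditional_Volume_of_Int_Bz_Vo}.

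The main obstacle, in my view, is the careful bookkeeping needed to justify that conditioning on the domain configuration $\mathcal{C}_\ell^k$ really does produce a deterministic intersection $V_o(\mathcal{C}_\ell^k)\cap\mathcal{B}_\ell(o)$, despite the fact that the full typical cell $V_o(\mathcal{C}_\ell^k)$ can still be random (edges may be determined by points of $\Phi$ outside $\mathcal{B}_{2\ell}(o)$). This was already argued in the discussion leading up to \eqref{eq:Conditional_Volume_of_Int_Bz_Vo} using the spherical-cap decomposition $\{L_i\}$ and the observation that a point $\mathbf{y}\in\partial\mathcal{B}_r(o)$ lies in the typical cell if and only if it avoids every $L_i$ with $i\in\ncalI(r)$; so what remains is essentially an application of Fubini to move the spherical-coordinate integration outside the expectation over $\mathcal{C}_\ell^k$ and then recognize the resulting nested integral.
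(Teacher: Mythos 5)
Your proposal is correct and follows essentially the same route as the paper: the paper's proof simply writes $F_{R_\ell}(z;k)=\mathbb{E}_{\mathcal{C}_\ell^k}[F_{R_\ell}(z;\mathcal{C}_\ell^k)]$ with the conditional CDF from \eqref{eq:CDR_R_Cond_confg} and the configuration pdf from \eqref{eq:Conf_Prob}, which is exactly your tower-property deconditioning. Your added remarks on the i.i.d.\ uniformity of the scaled points and the determinism of $V_o(\mathcal{C}_\ell^k)\cap\mathcal{B}_\ell(o)$ merely make explicit what the paper establishes in the discussion preceding the lemma.
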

\begin{proof}
The CDF of $R_\ell$ conditioned on $\Phi(\mathcal{B}_{2\ell}(o))=k$ is 
$F_{R_\ell}(z;k)=\mathbb{E}_{\mathcal{C}_\ell^k}[F_{R_\ell}(z;\mathcal{C}_\ell^k)]$ where $F_{R_\ell}(z;\mathcal{C}_\ell^k)$ is given by \eqref{eq:CDR_R_Cond_confg}, and the pdf of $\mathcal{C}_\ell^k$  is given in \eqref{eq:Conf_Prob}.
\end{proof}
Using Lemma \ref{lemma:CONd_CDF}, we present the distance distribution of a uniformly distributed point in the typical cell in the following theorem.  
 \begin{thm}
 \label{thm:DDistribution}
For the homogeneous PPP with intensity $\lambda$ on $\mathbb{R}^d$, the CDF of the distance $R_o$ from the nucleus to a uniformly random point in the typical cell $V_o$ is
 \begin{equation}
 F_{R_o}(z)=\lim_{l\to\infty}\sum_{k=0}^\infty F_{R_\ell}(z;k)\mathbb{P}(\Phi(\mathcal{B}_{2\ell}(o))=k),
 \label{eq:CDF_R}
  \end{equation}
  where $F_{R_\ell}(z;k)$ is given in Lemma \ref{lemma:CONd_CDF}. 
\end{thm}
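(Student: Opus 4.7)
The plan is to combine the law of total probability, applied to the point count $\Phi(\mathcal{B}_{2\ell}(o))$, with the limiting identity \eqref{eq:LimitingCase_Dist_of_R} that connects $R_\ell$ to $R_o$. Since Lemma \ref{lemma:CONd_CDF} already characterizes $F_{R_\ell}(z;k)$, almost all the work has been done; what remains is a two-line deconditioning argument followed by a limit.

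First, I would note that for any fixed $\ell>0$, the random variable $R_\ell$ is well-defined conditional on the event that $V_o\cap\mathcal{B}_\ell(o)$ has positive volume (which holds almost surely). Partitioning the probability space by the value of $\Phi(\mathcal{B}_{2\ell}(o))$ and using the total probability theorem gives
\begin{equation*}
F_{R_\ell}(z)=\mathbb{P}(R_\ell\leq z)=\sum_{k=0}^{\infty}\mathbb{P}(R_\ell\leq z\mid \Phi(\mathcal{B}_{2\ell}(o))=k)\,\mathbb{P}(\Phi(\mathcal{B}_{2\ell}(o))=k),
\end{equation*}
and by the definition of $F_{R_\ell}(z;k)$ in Lemma \ref{lemma:CONd_CDF} the conditional probabilities above are exactly $F_{R_\ell}(z;k)$.

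Next, I would appeal to \eqref{eq:LimitingCase_Dist_of_R}, which gives $F_{R_o}(z)=\lim_{\ell\to\infty}F_{R_\ell}(z)$. To justify this identity (if a short argument is required), I would use the fact that the typical cell $V_o$ is almost surely bounded, so for every $\epsilon>0$ there exists $\ell_0$ large enough that $\mathbb{P}(V_o\subset\mathcal{B}_\ell(o))>1-\epsilon$ for all $\ell\geq\ell_0$. On the event $\{V_o\subset\mathcal{B}_\ell(o)\}$, we have $V_o\cap\mathcal{B}_\ell(o)=V_o$ and hence the uniform point in the truncated cell coincides with the uniform point in $V_o$, which forces $R_\ell=R_o$. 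Consequently $R_\ell\to R_o$ in probability, and therefore $F_{R_\ell}(z)\to F_{R_o}(z)$ at every continuity point $z$ of $F_{R_o}$. Since $F_{R_o}$ is continuous (as the CDF of a distance to a uniformly random point in a random convex body with a continuous volume measure), this convergence holds for all $z\geq 0$.

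Combining the deconditioning identity with the limit then yields
\begin{equation*}
F_{R_o}(z)=\lim_{\ell\to\infty}F_{R_\ell}(z)=\lim_{\ell\to\infty}\sum_{k=0}^{\infty}F_{R_\ell}(z;k)\,\mathbb{P}(\Phi(\mathcal{B}_{2\ell}(o))=k),
\end{equation*}
which is the claim of Theorem \ref{thm:DDistribution}. The only mildly delicate step is the justification of \eqref{eq:LimitingCase_Dist_of_R} via almost-sure boundedness of $V_o$; the rest is elementary deconditioning. No interchange of limit and infinite sum is needed because the limit sits outside the series, so even monotone or dominated convergence arguments are unnecessary for the statement as written.
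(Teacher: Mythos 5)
Your proposal is correct and follows essentially the same two-step route as the paper: decondition on $\Phi(\mathcal{B}_{2\ell}(o))=k$ via the law of total probability, then invoke \eqref{eq:LimitingCase_Dist_of_R} to pass to the limit $\ell\to\infty$. Your added justification of the limit via the almost-sure boundedness of $V_o$ is a welcome extra detail that the paper leaves implicit, but it does not change the argument.
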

\begin{proof}
The proof follows in two steps. We first take the expectation of the conditional CDF of $R_\ell$, given in Lemma \ref{lemma:CONd_CDF}, over $k$. We then take the limit $\ell \to\infty$ under which this distance distribution of a uniformly distributed point in $V_o\cap \mathcal{B}_\ell(o)$ converges to that of a uniformly distributed point in $V_o$ per  \eqref{eq:LimitingCase_Dist_of_R}. 
\end{proof} 
 \begin{cor}
For the homogeneous PPP with intensity $\lambda$ on $\mathbb{R}^d$, the mean of the distance $R_o$ from  the nucleus to a uniformly random point in the typical cell $V_o$ is
\begin{align}
\mathbb{E}[R_o]=\lim_{\ell\to\infty}\int_{0}^\ell(1-\sum_{k=0}^\infty F_{R_\ell}(z;k)\mathbb{P}(\Phi(\mathcal{B}_{2\ell}(o))=k)){\rm d}z,
\label{eq:Mean_Distance}
\end{align}
where $F_{R_\ell}(z;k)$ is given in Lemma \ref{lemma:CONd_CDF}. 
\end{cor}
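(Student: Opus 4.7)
The plan is to apply the standard identity for the expectation of a nonnegative random variable, $\E[R_o] = \int_0^\infty (1 - F_{R_o}(z))\,{\rm d}z$, and then substitute the expression for $F_{R_o}(z)$ from Theorem~\ref{thm:DDistribution}. This yields
$$\E[R_o] = \int_0^\infty \left(1 - \lim_{\ell \to \infty} \sum_{k=0}^\infty F_{R_\ell}(z;k)\,\P(\Phi(\ncalB_{2\ell}(o)) = k)\right){\rm d}z,$$
so what remains is to push the limit $\ell \to \infty$ outside of the $z$-integral and to truncate the range of integration to $[0,\ell]$.

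The truncation step is immediate. Because $R_\ell$ is by construction the distance from the origin to a point lying in $V_o \cap \ncalB_\ell(o)$, we have $R_\ell \leq \ell$ almost surely, and hence $\sum_{k=0}^\infty F_{R_\ell}(z;k)\,\P(\Phi(\ncalB_{2\ell}(o)) = k) = 1$ for every $z \geq \ell$. The integrand therefore vanishes on $[\ell, \infty)$, so $\int_0^\infty (1 - F_{R_\ell}(z))\,{\rm d}z = \int_0^\ell (1 - F_{R_\ell}(z))\,{\rm d}z$, which matches the shape of the right-hand side of \eqref{eq:Mean_Distance}.

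The interchange of limit and integral is the more delicate step. I would justify it by monotone convergence. The family of random sets $\{V_o \cap \ncalB_\ell(o)\}_{\ell > 0}$ is nondecreasing in $\ell$ under set inclusion and converges almost surely to $V_o$, which is almost surely bounded for the homogeneous PPP. For $\ell \leq \ell'$, the uniform law on $V_o \cap \ncalB_{\ell'}(o)$ is a convex combination of the uniform law on $V_o \cap \ncalB_\ell(o)$ and the uniform law on the annular slice $V_o \cap (\ncalB_{\ell'}(o) \setminus \ncalB_\ell(o))$, from which one reads off that $R_\ell$ is stochastically dominated by $R_{\ell'}$. Consequently $z \mapsto 1 - F_{R_\ell}(z)$ is pointwise nondecreasing in $\ell$, and the monotone convergence theorem lets us pull the limit outside the integral; combining this with the truncation yields \eqref{eq:Mean_Distance}.

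The main obstacle is the rigorous verification of the stochastic dominance $R_\ell \leq_{\rm st} R_{\ell'}$, which relies on the a.s.\ boundedness and finite moments of the typical cell. An equivalent but perhaps cleaner route is to invoke dominated convergence with the $\ell$-independent majorant $\P(R_o > z)$, which is integrable because $\E[R_o] \leq \E[\tilde{R}_o] < \infty$ by Corollary~\ref{cor:Mean_Ro} together with the stochastic ordering observed in Fig.~\ref{fig:PDF_R_1DPPP} and to be reaffirmed in Section~\ref{sec:Approximate_CDF}. Either route yields the asserted closed form.
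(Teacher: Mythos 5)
The paper states this corollary without any proof, treating it as an immediate consequence of Theorem~\ref{thm:DDistribution} via the tail-integral formula $\mathbb{E}[R_o]=\int_0^\infty\big(1-F_{R_o}(z)\big)\,{\rm d}z$; your proposal follows exactly that implicit route and, usefully, supplies the two details the paper glosses over. Both of your steps are correct: the truncation to $[0,\ell]$ is valid because $R_\ell\leq\ell$ almost surely, and the interchange of $\lim_{\ell\to\infty}$ with the $z$-integral does follow by monotone convergence. For the latter, note that your stochastic-dominance claim admits an even more direct verification than the mixture argument you sketch: conditionally on the configuration, $F_{R_\ell}(z)=\upsilon_d(V_o\cap\mathcal{B}_z(o))/\upsilon_d(V_o\cap\mathcal{B}_\ell(o))$ for $z\leq\ell$, and the denominator is nondecreasing in $\ell$ while the numerator is fixed, so $1-F_{R_\ell}(z)$ is pointwise nondecreasing in $\ell$ realization by realization; taking expectations preserves this, and no appeal to a.s.\ boundedness of $V_o$ is needed for the monotone convergence theorem. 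The one weak point is your alternative dominated-convergence route: justifying integrability of the majorant $\mathbb{P}(R_o>z)$ by citing a figure and a stochastic ordering ``to be reaffirmed'' later is not a proof. Since the monotone-convergence argument is self-contained and works even if $\mathbb{E}[R_o]$ were infinite, you should drop the alternative and present only the first route.
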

\subsubsection{Numerical Results for $d=2$}
\label{sec:Numerical_Results}
\begin{figure}[t]
 \centering
\includegraphics[width=.55\textwidth]{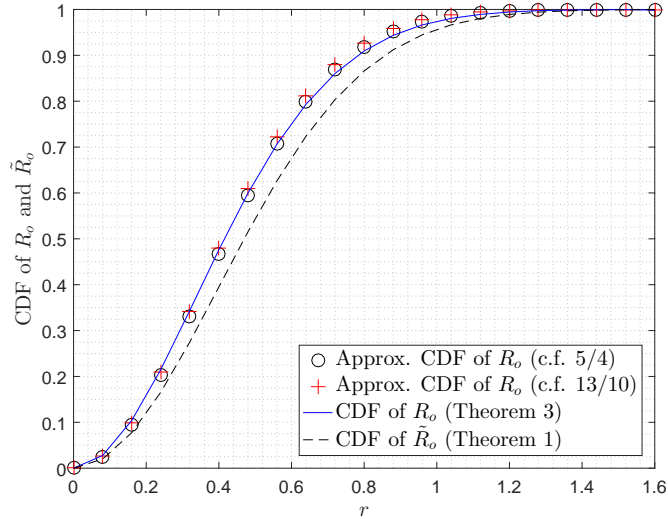}\vspace{-.3cm}
\caption{CDF of $R_o$ and $\tilde{R}_o$ for unit-intensity PPP on $\mathbb{R}^2$.} 
\label{fig:CDF_R}\vspace{.1cm}
\end{figure}
In Fig.~\ref{fig:CDF_R}, we plot the CDF of $\tilde{R}_o$ and the CDF of $R_o$ with $\ell = 1.6$ for $d=2$. This value for $\ell$ is selected because the probability that the distance of the farthest point in the typical cell in $\mathbb{R}^2$ is below $1.6$ is $0.99$ \cite{Calka2002}. The integrals in \eqref{eq:CDF_cond_K} are evaluated numerically using a Monte Carlo integration method. The numerically evaluated mean values of $\tilde{R}_o$ and $R_o$ come out to be $0.500$ and $0.445$. Given the complicated form of the exact CDF of $R_o$, it is desirable to construct closed-form approximations that could be used in obtaining design insights in application-oriented studies. On that note, it has been empirically demonstrated in \cite{Haenggi2017} and \cite{Martin2017_Meta} for $d=2$ that the CDF of $R_o$ can be tightly approximated by $1-\exp(- \pi\rho\lambda r^2)$. It is obtained by introducing a correction factor  (c.f.)~$\rho$ in the CDF of $\tilde{R}_o$ given in \eqref{eq:CDF_R_Crofton}, which reduces to $1-\exp(-\pi\lambda r^2)$ for $d=2$. Furthermore, \cite{Haenggi2017} and \cite{Martin2017_Meta} empirically show that $\rho=13/10$  and $5/4$ provide a close match for the exact CDF of $R_o$. This is also illustrated in Fig.~\ref{fig:CDF_R}. 
Building on these initial insights, we derive the aforementioned c.f.~$\rho$ for the general case of $d$ dimensions in the next section and provide a useful physical interpretation of the resulting value. \vspace{-.3cm}
\section{Approximation of the Distribution of $R_o$ }
\label{sec:Approximate_CDF}
As discussed in the previous section (and shown in Fig. \ref{fig:CDF_R}), the inclusion of an appropriate c.f.~$\rho$ to the CDF of $\tilde{R}_o$ provides a close approximation to the CDF of $R_o$ for $d=2$. Therefore, motivated by this, here we approximate the CDF of $R_o$ with the CDF of $\tilde{R}_o$ by including the c.f.~$\rho_d$ for the $d$-dimensional case. 
That is, the CDF of $R_o$ is approximated as $1-\exp(-\rho_d\lambda \kappa_d r^d)$. We determine the c.f.~$\rho_d$ by matching the $d$-th derivative of the second-order Taylor series expansion of the CDF of $R_o$ at $r=0$. Further, we show that $\rho_d$ is equal to the ratio of the mean volumes of the 0-cell $\tilde{V}_o$ and the typical cell $V_o$. Finally, we show that  $\rho_d\to 1$ as $d\to\infty$. Note that since \cite{Haenggi2017} and \cite{Martin2017_Meta} obtained c.f. $\rho_d$ for $d=2$ through curve fitting, the mathematical treatment provided in this section is new even for the specific case of $d=2$.

For the second-order Taylor series expansion of the CDF of $R_o$, the moments and covariance of the volume of the typical cell $V_o$ and the volume of intersection of $\mathcal{B}_r(o)$ with the typical cell $V_o$ are required. Therefore, before we determine the c.f.~$\rho_d$, we present these intermediate results in the following subsection. 
\subsection{Some Useful Results}
The moments and covariance of the volumes of the typical cell and its intersection with a ball can be derived using \eqref{eq:Moment_Random_Set} along with the void probability of the homogeneous PPP. We first present the second moment of the volume of the typical cell $V_o$ in the following lemma.
\begin{lemma}
\label{lemma:Moments_Volume_PVCell}
The second moment of the volume of the typical cell $V_o$ is
\begin{align}
\mathbb{E}[\upsilon_d(V_o)^2]=4\pi C_{d,2}\int_{0}^{\pi}\int_{0}^\infty\int_{0}^\infty\exp(-\lambda U(v_1,v_2,u))(v_1v_2)^{d-1}(\sin u)^{d-2}{\rm d}v_2{\rm d}v_1{\rm d}u,
\label{eq:2ndMoment_Volume_PVCell}
\end{align} 
where
\begin{align}
U(v_1,v_2,u)=\kappa_dv_1^d+\kappa_dv_2^d-\kappa_dv_1^d\int_0^{\psi_1}\alpha_d\sin^d\psi{\rm d}\psi-\kappa_dv_2^d\int_0^{\psi_2}\alpha_d\sin^d\psi{\rm d}\psi,
\label{eq:Volume_Union_TwoBalls}
\end{align}
$C_{d,2}=\frac{d!}{2(d-2)!}\frac{\kappa_d\kappa_{d-1}}{\kappa_2\kappa_1}$,  $\alpha_d=\frac{\Gamma(\frac{d}{2}+1)}{\Gamma(\frac{1}{2})\Gamma(\frac{d+1}{2})}$, $\psi_1+\psi_2=\pi-u$ and $v_1^d\sin^d\psi_1=v_2^d\sin^d\psi_2$.
\end{lemma}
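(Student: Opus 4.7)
The plan is to apply the Robbins-type moment formula \eqref{eq:Moment_Random_Set} with $n=2$,
\begin{align*}
\mathbb{E}[\upsilon_d(V_o)^2] = \int_{\mathbb{R}^d}\int_{\mathbb{R}^d} \mathbb{P}(x_1,x_2 \in V_o)\,{\rm d}x_1\,{\rm d}x_2,
\end{align*}
and to evaluate $\mathbb{P}(x_1,x_2\in V_o)$ via the PPP void probability. Under the Palm distribution of $\Phi\cup\{o\}$ at $o$, the condition $x\in V_o$ is equivalent to $\Phi\cap\mathcal{B}_{\|x\|}(x)=\emptyset$, so with $v_i=\|x_i\|$ one has $\mathbb{P}(x_1,x_2\in V_o)=\exp(-\lambda\,\upsilon_d(\mathcal{B}_{v_1}(x_1)\cup\mathcal{B}_{v_2}(x_2)))$. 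By rotation invariance, this depends on $(x_1,x_2)$ only through $(v_1,v_2,u)$, where $u\in[0,\pi]$ is the angle between $x_1$ and $x_2$ as seen from $o$.

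Next I would derive the explicit formula \eqref{eq:Volume_Union_TwoBalls} for the union volume. The key geometric observation is that $\partial\mathcal{B}_{v_1}(x_1)$ and $\partial\mathcal{B}_{v_2}(x_2)$ both pass through $o$, and subtracting the defining equations $\|p-x_i\|^2=v_i^2$ gives $p\cdot x_1=p\cdot x_2=\|p\|^2/2$ for every $p$ in the sphere intersection. Hence the $(d-2)$-dimensional intersection sphere lies in the hyperplane $\{p:p\cdot(x_1-x_2)=0\}$, which passes through $o$ and is orthogonal to $x_1-x_2$. The ball intersection is therefore a lens bounded by two spherical caps; considering the triangle $o\,x_1\,x_2$ and using that $o$ is on the intersection sphere, the cap of $\mathcal{B}_{v_1}(x_1)$ has half-angle $\psi_1$ equal to the interior angle of the triangle at $x_1$, and likewise for $\psi_2$, so $\psi_1+\psi_2=\pi-u$ and the law of sines yields $v_1\sin\psi_1=v_2\sin\psi_2$. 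Slicing a $d$-ball of radius $r$ perpendicular to the cap axis gives the cap volume $\kappa_{d-1}r^d\int_0^\psi\sin^d\theta\,{\rm d}\theta$, and the Gamma-function identity $\kappa_{d-1}=\kappa_d\alpha_d$ recasts this as $\kappa_d\alpha_d r^d\int_0^\psi\sin^d\theta\,{\rm d}\theta$. Subtracting the two caps from $\kappa_d v_1^d+\kappa_d v_2^d$ delivers $U(v_1,v_2,u)$.

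Finally I would reduce the double integral over $\mathbb{R}^d\times\mathbb{R}^d$ to the triple integral in $(v_1,v_2,u)$. Since the integrand depends only on these three variables, I would integrate the direction of $x_1$ over the unit $(d-1)$-sphere (surface measure $d\kappa_d$) and parametrize the direction of $x_2$ by its polar angle $u$ relative to $x_1$ together with a $(d-2)$-dimensional sub-sphere of azimuthal angles (surface measure $(d-1)\kappa_{d-1}$), with Jacobian $\sin^{d-2}u$ from the polar angle and $v_1^{d-1}v_2^{d-1}$ from the radial parts. The overall prefactor is $d(d-1)\kappa_d\kappa_{d-1}$, which coincides with $4\pi C_{d,2}$ after substituting $\kappa_1=2$ and $\kappa_2=\pi$ in the definition of $C_{d,2}$.

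The main obstacle is the geometric step: recognizing that the two sphere boundaries meet in a hyperplane through the origin (anchoring both caps at $o$), and reading off the cap half-angles as the angles of the planar triangle $o\,x_1\,x_2$. Once this picture is in place, the cap-volume integration and the spherical change of variables, while bookkeeping-heavy, are routine.
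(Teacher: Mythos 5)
Your proposal is correct and follows the same route as the paper: Robbins' formula with $n=2$ together with the void probability of the union $\mathcal{B}_{\|x_1\|}(x_1)\cup\mathcal{B}_{\|x_2\|}(x_2)$. The only difference is that the paper delegates the remaining work (the two-cap decomposition yielding $U(v_1,v_2,u)$ and the reduction to the coordinates $(v_1,v_2,u)$) to the cited proof of \cite[Theorem 3.1]{alishahi2008}, whereas you carry those steps out explicitly, and your details --- the hyperplane through $o$ orthogonal to $x_1-x_2$, the triangle angles giving $\psi_1+\psi_2=\pi-u$ and $v_1\sin\psi_1=v_2\sin\psi_2$, and the prefactor check $4\pi C_{d,2}=d(d-1)\kappa_d\kappa_{d-1}$ --- are all correct.
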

\noindent Note that $U(v_1,v_2,u)$ represents the union of balls of radii $v_1$ and $v_2$ with centers at angle $u$.
\begin{proof}
Using \eqref{eq:Moment_Random_Set}, we obtain the second moment of $\upsilon_d(V_o)$ as  
\begin{align*}
\mathbb{E}[\upsilon_d(V_o)^2]&=\int_{\mathbb{R}^d}\int_{\mathbb{R}^d}\mathbb{P}(x_1,x_2\in V_o){\rm d}x_1{\rm d}x_2 \\
&=\int_{\mathbb{R}^d}\int_{\mathbb{R}^d}\exp(-\lambda\upsilon_d(\mathcal{B}_{\|x_1\|}(x_1)\cup \mathcal{B}_{\|x_2\|}(x_2))){\rm d}x_2{\rm d}x_1. \numberthis
\end{align*}
Further, following the steps from the proof of \cite[Theorem 3.1]{alishahi2008}, we can obtain \eqref{eq:2ndMoment_Volume_PVCell}. 
\end{proof}
The $n$-th moment of the volume of the intersection of a ball of arbitrary radius with the typical cell  is obtained in \cite[Lemma 4.2]{alishahi2008}. Using this result, we present the first and second moments of $\upsilon_d(\mathcal{B}_r(o)\cap V_o)$ in the following lemma. 
\begin{lemma}
\label{lemma:Moments_Volume_Int_Ball_PVCell}
The first and second moments of the volume of the intersection of the ball $\mathcal{B}_r(o)$ with the typical cell $V_o$ are  
\begin{align}
\mathbb{E}[\upsilon_d(\mathcal{B}_r(o)\cap V_o)]=\frac{1}{\lambda}\left(1-\exp(-\lambda\kappa_d r^d)\right)
 \label{eq:1stMoment_Volume_Int_Ball_PVCell}
\end{align}
and 
\begin{align}
\mathbb{E}[\upsilon_d(\mathcal{B}_r(o)\cap V_o)^2]=4\pi C_{d,2}\int_{0}^{\pi}\int_{0}^r\int_{0}^r\exp(-\lambda U(v_1,v_2,u))(v_1 v_2)^{d-1}(\sin u)^{d-2}{\rm d} v_2 {\rm d} v_1 {\rm d}u,
 \label{eq:2ndMoment_Volume_Int_Ball_PVCell}
\end{align}
where $U(v_1,v_2,u)$ is given by \eqref{eq:Volume_Union_TwoBalls}.
\end{lemma}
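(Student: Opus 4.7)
The plan is to invoke Robbins' formula \eqref{eq:Moment_Random_Set} with $n=2$ and $X = \mathcal{B}_r(o) \cap V_o$, and then mirror the derivation of Lemma \ref{lemma:Moments_Volume_PVCell} with the radial integration limits truncated from $[0,\infty)$ to $[0,r]$. The first identity \eqref{eq:1stMoment_Volume_Int_Ball_PVCell} has already been established in Lemma \ref{lemma:Mean_IntBall_PVCell}, so no new work is required there; the whole effort is devoted to \eqref{eq:2ndMoment_Volume_Int_Ball_PVCell}.

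For the second moment, I would first observe that the event $\{x_1, x_2 \in \mathcal{B}_r(o) \cap V_o\}$ is the intersection of the deterministic event $\{x_1, x_2 \in \mathcal{B}_r(o)\}$ and the typical-cell event $\{x_1, x_2 \in V_o\}$. By the defining property of $V_o$, the latter is equivalent to $\Phi \cap (\mathcal{B}_{\|x_1\|}(x_1) \cup \mathcal{B}_{\|x_2\|}(x_2)) = \emptyset$, and the void probability of the homogeneous PPP therefore gives
\begin{align*}
\mathbb{E}[\upsilon_d(\mathcal{B}_r(o) \cap V_o)^2] = \int_{\mathcal{B}_r(o)}\int_{\mathcal{B}_r(o)} \exp\bigl(-\lambda \upsilon_d(\mathcal{B}_{\|x_1\|}(x_1) \cup \mathcal{B}_{\|x_2\|}(x_2))\bigr) \, \mathrm{d}x_2 \, \mathrm{d}x_1.
\end{align*}

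Next, I would pass to hyperspherical coordinates for each of $x_1$ and $x_2$. By rotational invariance about the origin, the integrand depends only on $v_1 = \|x_1\|$, $v_2 = \|x_2\|$, and the angle $u \in [0,\pi]$ subtended by $x_1$ and $x_2$ at the origin; the remaining angular variables integrate out to the explicit constant $4\pi C_{d,2}$ identified in the proof of Lemma \ref{lemma:Moments_Volume_PVCell} and in \cite[Thm.~3.1]{alishahi2008}. The union volume $U(v_1, v_2, u)$ in \eqref{eq:Volume_Union_TwoBalls} is the standard $d$-dimensional lens computation: two ball volumes $\kappa_d v_1^d + \kappa_d v_2^d$ minus the double-counted intersection, which decomposes into two spherical caps meeting on the common hyperplane of intersection, with the cap angles $\psi_1, \psi_2$ pinned down by the geometric constraints $\psi_1 + \psi_2 = \pi - u$ and $v_1^d \sin^d \psi_1 = v_2^d \sin^d \psi_2$.

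The only genuine departure from the proof of Lemma \ref{lemma:Moments_Volume_PVCell} is precisely the truncation of the radial integrals from $[0,\infty)$ to $[0,r]$, which is forced by the confinement $x_1, x_2 \in \mathcal{B}_r(o)$; the angular reduction and the formula for $U$ carry over without modification. The main technical obstacle, namely the union-volume computation in arbitrary dimension, is therefore already handled in the cited references, so the present lemma is obtained by direct adaptation rather than any new geometric argument.
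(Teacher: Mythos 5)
Your proposal is correct and follows essentially the same route as the paper: the first moment is quoted from Lemma \ref{lemma:Mean_IntBall_PVCell}, and the second moment is obtained by applying \eqref{eq:Moment_Random_Set} with $n=2$, rewriting $\mathbb{P}(x_1,x_2\in V_o)$ via the void probability of $\Phi$ on $\mathcal{B}_{\|x_1\|}(x_1)\cup\mathcal{B}_{\|x_2\|}(x_2)$, and repeating the angular reduction of Lemma \ref{lemma:Moments_Volume_PVCell} with the radial integrals truncated to $[0,r]$. The only difference is that you spell out the steps the paper compresses into ``following similar steps as in Lemma \ref{lemma:Moments_Volume_PVCell}''.
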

\begin{proof}
The first moment in \eqref{eq:1stMoment_Volume_Int_Ball_PVCell} follows from Lemma \ref{lemma:Mean_IntBall_PVCell}. Similar to the  second moment of the volume of the typical cell derived in Lemma \ref{lemma:Moments_Volume_PVCell}, the second moment of the volume of $\mathcal{B}_r(o)\cap V_o$ can be determined as
 \begin{align*}
 \mathbb{E}[\upsilon_d(\mathcal{B}_r(o)\cap V_o)^2]&=\int_{\mathbb{R}^d}\int_{\mathbb{R}^d}\mathbb{P}(x_1,x_2\in \mathcal{B}_r(o)\cap V_o){\rm d}x_1{\rm d}x_2\nonumber\\
 &=\int_{\mathbb{R}^d\cap \mathcal{B}_r(o)}\int_{\mathbb{R}^d\cap \mathcal{B}_r(o)}\exp\left(-\lambda\upsilon_d(\mathcal{B}_{\|x_1\|}(x_1)\cup \mathcal{B}_{\|x_2\|}(x_2))\right){\rm d}x_1{\rm d}x_2.
 \end{align*}
Following similar steps as in Lemma \ref{lemma:Moments_Volume_PVCell} completes the proof.
\end{proof}
In \cite[Lemma~3.1]{Olsbo2007}, the correlation between the volume of the \textit{typical Stienen sphere} and the volume of the typical cell is derived. Using the approach of \cite{Olsbo2007}, we provide the covariance of the volumes of $\mathcal{B}_r(o)\cap V_o$ and $V_o$ in the following lemma.
\begin{lemma}
\label{lemma:Cov_Int_Ball_PVCell_1}
The covariance of the volume of the intersection of $\mathcal{B}_r(o)$ with the typical cell $V_o$ and the volume of the typical cell $V_o$ is 
\begin{align}
\mathtt{Cov}[\upsilon_d(\mathcal{B}_r(o)\cap V_o),\upsilon_d(V_o)]=&  \frac{1}{2}\mathtt{Var}[\upsilon_d(V_o)] - \frac{1}{2\lambda^2}\left(1-2\exp(-\lambda\kappa_d r^d)\right) \label{eq:Cov_Int_Ball_PVCell_1}\\
& + 2\pi C_{d,2}\int_{0}^{\pi}\int_{0}^r\int_{0}^r\exp(-\lambda U(v_1,v_2,u))(v_1v_2)^{d-1}(\sin u)^{d-2}{\rm d}v_2{\rm d}v_1{\rm d}u \nonumber\\
& - 2\pi C_{d,2}\int_{0}^{\pi}\int_{r}^\infty\int_{r}^\infty\exp(-\lambda U(v_1,v_2,u))(v_1v_2)^{d-1}(\sin u)^{d-2}{\rm d}v_2{\rm d}v_1{\rm d}u,\nonumber
\end{align}
where $U(v_1,v_2,u)$ is given by \eqref{eq:Volume_Union_TwoBalls}.
\end{lemma}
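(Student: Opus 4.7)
The plan is to decompose the typical cell as $V_o = (\mathcal{B}_r(o)\cap V_o) \cup (V_o\setminus \mathcal{B}_r(o))$ and reduce the target covariance to a linear combination of three variances, each of which is already computable from \eqref{eq:Moment_Random_Set} together with the void probability of the PPP. Set $A := \upsilon_d(\mathcal{B}_r(o)\cap V_o)$ and $B := \upsilon_d(V_o\setminus \mathcal{B}_r(o))$, so that $\upsilon_d(V_o)=A+B$ and the two parts are measurable with respect to the same $\Phi$. Bilinearity of covariance together with $\mathtt{Var}[A+B]=\mathtt{Var}[A]+\mathtt{Var}[B]+2\mathtt{Cov}[A,B]$ yields
\begin{align*}
\mathtt{Cov}[A,A+B]\;=\;\mathtt{Var}[A]+\mathtt{Cov}[A,B]\;=\;\tfrac{1}{2}\mathtt{Var}[A+B]+\tfrac{1}{2}\bigl(\mathtt{Var}[A]-\mathtt{Var}[B]\bigr),
\end{align*}
so it suffices to show that $\tfrac{1}{2}(\mathtt{Var}[A]-\mathtt{Var}[B])$ matches the last three terms on the right-hand side of \eqref{eq:Cov_Int_Ball_PVCell_1}.

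The first and second moments of $A$ are already supplied by Lemma \ref{lemma:Moments_Volume_Int_Ball_PVCell}. I would obtain the corresponding moments of $B$ by the exact same recipe, but integrating over $\mathbb{R}^d\setminus \mathcal{B}_r(o)$ rather than $\mathcal{B}_r(o)$. Robbins' identity \eqref{eq:Moment_Random_Set} combined with the void probability gives
\begin{align*}
\mathbb{E}[B]\;=\;\int_{\mathbb{R}^d\setminus \mathcal{B}_r(o)}\mathbb{P}(x\in V_o)\,\mathrm{d}x\;=\;\frac{1}{\lambda}\exp(-\lambda\kappa_d r^d),
\end{align*}
which is also consistent with $\mathbb{E}[\upsilon_d(V_o)]-\mathbb{E}[A]=\lambda^{-1}-\lambda^{-1}(1-\exp(-\lambda\kappa_d r^d))$. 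For the second moment, the angular/radial reduction in the proof of Lemma \ref{lemma:Moments_Volume_PVCell} (taken from \cite{alishahi2008}) is agnostic to the lower radial limits of the two $v_i$ integrations, so it carries over verbatim to give the triple integral in \eqref{eq:Cov_Int_Ball_PVCell_1} with both $v_i$ ranging over $(r,\infty)$ in place of $(0,r)$.

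With both sets of moments in hand I would combine them via
\begin{align*}
\mathtt{Var}[A]-\mathtt{Var}[B]\;=\;\bigl(\mathbb{E}[A^2]-\mathbb{E}[B^2]\bigr)+\bigl(\mathbb{E}[B]^2-\mathbb{E}[A]^2\bigr).
\end{align*}
The first bracket is exactly the difference between the inside-ball and outside-ball triple integrals that appear in \eqref{eq:Cov_Int_Ball_PVCell_1}, while the second collapses algebraically using $(1-e^{-\lambda\kappa_d r^d})^2-e^{-2\lambda\kappa_d r^d}=1-2e^{-\lambda\kappa_d r^d}$, giving $\mathbb{E}[B]^2-\mathbb{E}[A]^2=-\lambda^{-2}(1-2\exp(-\lambda\kappa_d r^d))$. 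Multiplying by $\tfrac{1}{2}$ and adding $\tfrac{1}{2}\mathtt{Var}[\upsilon_d(V_o)]$ then reproduces \eqref{eq:Cov_Int_Ball_PVCell_1} term by term.

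The genuinely new piece of work is the second-moment computation for $B$: it requires the same angular reduction based on the pairwise union $U(v_1,v_2,u)$ as in the proof of Lemma \ref{lemma:Moments_Volume_PVCell}, and, because the radial integrations now run to infinity, a convergence check. The latter is immediate from the exponential decay of $\exp(-\lambda U(v_1,v_2,u))$ in both $v_1$ and $v_2$, since $U\ge \max(\kappa_dv_1^d,\kappa_dv_2^d)$ up to a factor depending only on $d$. I therefore expect the proof to borrow the machinery of \cite{alishahi2008,Olsbo2007} essentially without modification, with the only subtlety being bookkeeping of the two complementary radial domains.
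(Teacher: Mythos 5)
Your proposal is correct and follows essentially the same route as the paper: the authors also set $\hat{V}_o(r)=V_o\setminus(V_o\cap\mathcal{B}_r(o))$, use the identity $\mathtt{Cov}[A,A+B]=\tfrac{1}{2}\mathtt{Var}[A+B]+\tfrac{1}{2}(\mathtt{Var}[A]-\mathtt{Var}[B])$ in the equivalent form obtained from expanding $\mathtt{Var}[\upsilon_d(\hat{V}_o(r))]$, and compute the moments of $\hat{V}_o(r)$ via Robbins' formula restricted to $\mathbb{R}^d\setminus\mathcal{B}_r(o)$, exactly as you do. The only cosmetic difference is that you group the terms as $\bigl(\mathbb{E}[A^2]-\mathbb{E}[B^2]\bigr)+\bigl(\mathbb{E}[B]^2-\mathbb{E}[A]^2\bigr)$ rather than writing out the two variances separately.
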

\begin{proof}
Let $\hat{V}_o(r)=V_o\setminus V_o\cap \mathcal{B}_r(o)$. The variance of the volume of $\hat{V}_o(r)$ is
\begin{align*}
\mathtt{Var}[\upsilon_d(\hat{V}_o(r))]&=\mathtt{Var}[\upsilon_d({V_o})-\upsilon_d(\mathcal{B}_r(o)\cap V_o)]\nonumber\\
&=\mathtt{Var}[\upsilon_d(V_o)]+\mathtt{Var}[\upsilon_d(\mathcal{B}_r(o)\cap V_o)]-2\mathtt{Cov}[\upsilon_d(\mathcal{B}_r(o)\cap V_o),\upsilon_d(V_o)].
\end{align*}
This implies
\begin{align}
\mathtt{Cov}[\upsilon_d(\mathcal{B}_r(o)\cap V_o),\upsilon_d(V_o)]=\frac{1}{2}\mathtt{Var}[\upsilon_d(V_o)]+\frac{1}{2}\mathtt{Var}[\upsilon_d(\mathcal{B}_r(o)\cap V_o)]-\frac{1}{2}\mathtt{Var}[\upsilon_d(\hat{V}_o(r))].
\label{eq:Cov_Int_Ball_PVCell}
\end{align}
Using Lemma \ref{lemma:Moments_Volume_Int_Ball_PVCell},  the variance of $\upsilon_d(\mathcal{B}_r(o)\cap V_o)$ can be expressed as
\begin{align}
\mathtt{Var}[\upsilon_d(\mathcal{B}_r(o)\cap V_o)]&=4\pi C_{d,2}\int_{0}^{\pi}\int_{0}^r\int_{0}^r\exp(-\lambda U(v_1,v_2,u))(v_1v_2)^{d-1}(\sin u)^{d-2}{\rm d}v_2{\rm d}v_1{\rm d}u\nonumber\\
&~~~~-\frac{1}{\lambda^2}\left(1-\exp(-\lambda\kappa_d r^d)\right)^2.
\label{eq:Var_Volume_Int_Ball_PVCell}
\end{align} 
Now, we obtain the mean and variance of $\hat{V}_o(r)$. Using \eqref{eq:1stMoment_Volume_Int_Ball_PVCell}, the first moment becomes
\begin{equation}
\mathbb{E}[\upsilon_d(\hat{V}_o(r))]=\mathbb{E}[\upsilon_d(V_o)-\upsilon_d(\mathcal{B}_r(o)\cap V_o)]=\frac{1}{\lambda}\exp(-\lambda\kappa_d r^d).
\label{eq:1stMoment_Volume_Rem_Int_PVCell}
\end{equation}

Using \eqref{eq:Moment_Random_Set}, we can obtain the second moment as 
\begin{align}
\mathbb{E}[\upsilon_d(\hat{V}_o(r))^2]&=\int_{\mathbb{R}^d}\int_{\mathbb{R}^d}\mathbb{P}(r< \|x_1\|,r< \|x_2\|,x_1,x_2\in V_o(r)){\rm d}x_2{\rm d}x_1\nonumber\\
&=\int_{\mathbb{R}^d\setminus \mathcal{B}_r(o)}\int_{\mathbb{R}^d\setminus \mathcal{B}_r(o)}\mathbb{P}(x_1,x_2\in V_o){\rm d}x_2{\rm d}x_1\nonumber\\
&\stackrel{(a)}{=}4\pi C_{d,2}\int_{0}^{\pi}\int_{r}^\infty\int_{r}^\infty\exp(-\lambda U(v_1,v_2,u))(v_1v_2)^{d-1}(\sin u)^{d-2}{\rm d}v_2{\rm d}v_1{\rm d}u,
\label{eq:2ndMoment_Volume_Rem_Int_PVCell}
\end{align}
where (a) follows from the same steps as in Lemma \ref{lemma:Moments_Volume_PVCell} for the second moment of the volume of typical cell $V_o$. 
Lastly, substituting \eqref{eq:Var_Volume_Int_Ball_PVCell}, \eqref{eq:1stMoment_Volume_Rem_Int_PVCell} and \eqref{eq:2ndMoment_Volume_Rem_Int_PVCell} in \eqref{eq:Cov_Int_Ball_PVCell} completes the proof.
\end{proof}
Since we use $1-\exp(-\rho_d\lambda\kappa_d r^d)$ for the approximation of CDF of $R_o$, the c.f.~$\rho_d$ is determined by matching the $d$-th derivative of the second-order approximation of the CDF of $R_o$ at $r=0$. As the second-order Taylor series expansion of the CDF includes the covariance term given in Lemma \ref{lemma:Cov_Int_Ball_PVCell_1}, we first provide its $d$-th derivative at $r=0$ in the following lemma.
\begin{lemma}
\label{lemma:Cov_derivative}
The $d$-th derivative of the covariance of the volume of the  intersection of $\mathcal{B}_r(o)$ with the typical cell $V_o$ and the volume of typical cell $V_o$ w.r.t. $r$ is zero at $r=0$.
\end{lemma}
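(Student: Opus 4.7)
The plan is to verify Lemma \ref{lemma:Cov_derivative} by differentiating the explicit covariance formula in Lemma \ref{lemma:Cov_Int_Ball_PVCell_1} term by term. Write the covariance as $T_1 + T_2(r) + T_3(r) + T_4(r)$, where $T_1 = \tfrac{1}{2}\mathtt{Var}[\upsilon_d(V_o)]$ is constant, $T_2(r) = -\tfrac{1}{2\lambda^2}(1 - 2e^{-\lambda\kappa_d r^d})$, and $T_3(r)$, $T_4(r)$ denote the double integrals over $[0,r]^2$ and $[r,\infty)^2$, respectively. The goal is to show $\sum_{i=1}^{4} T_i^{(d)}(0) = 0$. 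Clearly $T_1^{(d)}(0) = 0$. For $T_2$, Taylor-expanding $e^{-\lambda\kappa_d r^d} = 1 - \lambda\kappa_d r^d + O(r^{2d})$ and extracting the $r^d$ coefficient gives $T_2^{(d)}(0) = -d!\kappa_d/\lambda$.

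For $T_3$, the key observation is that the integrand carries a factor $(v_1 v_2)^{d-1}$, while $\int_0^\pi e^{-\lambda U(v_1,v_2,u)}(\sin u)^{d-2}du$ is analytic in $(v_1,v_2)$ at the origin. Integrating the resulting Taylor expansion over $[0,r]^2$ term by term forces $T_3(r) = O(r^{2d})$, hence $T_3^{(d)}(0) = 0$ since $2d > d$ for $d \geq 1$. The nontrivial piece is $T_4(r)$. I will apply the inclusion-exclusion identity
\[
\int_r^\infty\!\!\int_r^\infty = \int_0^\infty\!\!\int_0^\infty - \int_0^r\!\!\int_0^\infty - \int_0^\infty\!\!\int_0^r + \int_0^r\!\!\int_0^r
\]
to split $T_4$ into a constant, two single-$r$-limit integrals equal by the $v_1\leftrightarrow v_2$ symmetry of the integrand, and a copy of $T_3$. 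The constant and the $T_3$-copy contribute nothing at $r=0$ after $d$ differentiations, leaving only $4\pi C_{d,2}\,J(r)$, where
\[
J(r) = \int_0^r v_1^{d-1}\tilde P(v_1)\,dv_1,\qquad \tilde P(v_1) = \int_0^\infty v_2^{d-1}\!\int_0^\pi e^{-\lambda U(v_1,v_2,u)}(\sin u)^{d-2}\,du\,dv_2.
\]

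A Taylor expansion of $\tilde P$ around $v_1 = 0$ then yields $J^{(d)}(0) = (d-1)!\,\tilde P(0)$. To evaluate $\tilde P(0)$, I will use that the degenerate ball of radius zero contributes nothing to the union, so $U(0,v_2,u) = \kappa_d v_2^d$ independent of $u$; the double integral separates into $B_d \cdot \int_0^\infty v_2^{d-1} e^{-\lambda\kappa_d v_2^d}\,dv_2 = B_d/(d\lambda\kappa_d)$, where $B_d = \int_0^\pi \sin^{d-2} u\,du = \sqrt\pi\,\Gamma(\tfrac{d-1}{2})/\Gamma(\tfrac{d}{2})$. Substituting the closed form $4\pi C_{d,2} = d(d-1)\kappa_d\kappa_{d-1}$ (immediate from the definition of $C_{d,2}$) gives $T_4^{(d)}(0) = (d-1)(d-1)!\kappa_{d-1}B_d/\lambda$. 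I expect the crux to be the final algebraic check that $(d-1)\kappa_{d-1}B_d = d\kappa_d$; substituting the closed forms of $\kappa_\cdot$ and $B_d$ reduces it to $\Gamma(\tfrac{d+1}{2}) = \tfrac{d-1}{2}\Gamma(\tfrac{d-1}{2})$, which is just $\Gamma(z+1) = z\Gamma(z)$ at $z = (d-1)/2$. With this identity, $T_4^{(d)}(0) = d!\kappa_d/\lambda$ cancels $T_2^{(d)}(0)$ exactly, completing the proof.
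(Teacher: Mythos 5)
Your proof is correct and reaches the paper's conclusion by the same underlying cancellation, but the reduction is organized differently. The paper differentiates both double integrals $f_1(r)=\iint_{[0,r]^2}g$ and $f_2(r)=\iint_{[r,\infty)^2}g$ directly by iterating Leibniz's rule $d$ times, which generates diagonal boundary terms proportional to $\frac{{\rm d}^{d-2}}{{\rm d}r^{d-2}}g(r,r)$ at each stage; these must be observed to cancel in the difference $f_1-f_2$, leaving only the cross terms $\int_0^\infty \frac{{\rm d}^{d-1}}{{\rm d}r^{d-1}}g(r,v_2)\,{\rm d}v_2$ and its mirror. You instead perform an inclusion--exclusion on the integration domain, which isolates those cross terms at the outset, and you dispose of the $[0,r]^2$ pieces with a one-line order-of-vanishing argument ($T_3(r)=O(r^{2d})$ because of the factor $(v_1v_2)^{d-1}$) rather than by tracking their derivatives. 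From that point on the two computations coincide: the $(d-1)$-fold derivative of $r^{d-1}\times(\text{smooth})$ at $r=0$ leaves $(d-1)!$ times the value at $r=0$, the degenerate union $U(0,v_2,u)=\kappa_d v_2^d$ decouples the angular integral, and the identity $(d-1)\kappa_{d-1}\int_0^\pi\sin^{d-2}u\,{\rm d}u=d\kappa_d$ (equivalently the paper's $2\pi^{d/2}\Gamma(d)/\Gamma(d/2)=d!\,\kappa_d$) produces exactly $d!\,\kappa_d/\lambda$, cancelling the $-d!\,\kappa_d/\lambda$ from the exponential term. Your route buys a cleaner bookkeeping --- no diagonal terms ever appear and the symmetry $g(v_1,v_2)=g(v_2,v_1)$ is used once, up front --- at the cost of the mild extra claim that the $[0,r]^2$ integral is $d$-times differentiable with vanishing $d$-th derivative at the origin, which is immediate from the same Leibniz expansions the paper already uses.
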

\begin{proof}
Using Lemma \ref{lemma:Cov_Int_Ball_PVCell_1}, we can write
\begin{align}
\frac{{\rm d}^d}{{\rm d}r^d}\mathtt{Cov}[\upsilon_d(\mathcal{B}_r(o)\cap V_o),\upsilon_d(V_o)]\bigg|_{r=0}=\frac{{\rm d}^d}{{\rm d}r^d}\frac{1}{\lambda^2}\exp(-\lambda\kappa_d r^d)\bigg|_{r=0} +\frac{{\rm d}^d}{{\rm d}r^d}  \left( f_1(r) -  f_2(r)\right)\bigg|_{r=0},
\label{eq:derivative_of_Cov_1}
\end{align}
 where
\begin{align*}
f_1(r)= \int_{0}^r\int_{0}^rg(v_1,v_2){\rm d}v_2{\rm d}v_1,\text{~~and~~} f_2(r)=\int_{r}^\infty\int_{r}^\infty g(v_1,v_2){\rm d}v_2{\rm d}v_1,
\end{align*}
 such that 
 \begin{align*}
 g(v_1,v_2)=2\pi C_{d,2}\int_{0}^{\pi}\exp(-\lambda U(v_1,v_2,u))(v_1v_2)^{d-1}(\sin u)^{d-2}{\rm d}u.
\end{align*}  
Further,
\begin{equation}
\frac{{\rm d}^d}{{\rm d}r^d}\frac{1}{\lambda^2}\exp(-\lambda\kappa_d r^d)\bigg|_{r=0}=-\frac{1}{\lambda}d!\kappa_d=-\frac{1}{\lambda}2\pi^{\frac{d}{2}}\frac{\Gamma(d)}{\Gamma(\frac{d}{2})}.
\label{eq:Derivetive_Cov_1stTerm}
\end{equation}
Now, differentiating $f_1$ w.r.t. $r$, we obtain
\begin{align*}
\frac{{\rm d}}{{\rm d}r}f_1(r)&=\frac{{\rm d}}{{\rm d}r} \int_{0}^r{\rm d}v_1\int_{0}^{r}g(v_1,v_2){\rm d}v_2\\
&\stackrel{(a)}{=}\int_{0}^{r}g(r,v_2){\rm d}v_2 + \int_{0}^r{\rm d}v_1\frac{{\rm d}}{{\rm d}r}\int_{0}^{r}g(v_1,v_2){\rm d}v_2\\
&\stackrel{(b)}{=} \int_{0}^{r}g(r,v_2){\rm d}v_2 +  \int_{0}^rg(v_1,r){\rm d}v_1,
\end{align*}
where (a) and (b) are obtained using the successive application of Leibniz's integral rule. Again differentiating, we obtain
\begin{align*}
\frac{{\rm d^2}}{{\rm d}r^2}f_1(r)&= \frac{{\rm d}}{{\rm d}r} \int_{0}^{r}g(r,v_2){\rm d}v_2 +  \frac{{\rm d}}{{\rm d}r}\int_{0}^rg(v_1,r){\rm d}v_1\\
&\stackrel{(a)}{=} 2g(r,r) +  \int_0^r \frac{{\rm d}}{{\rm d}r} g(r,v_2){\rm d}v_2 +   \int_0^r \frac{{\rm d}}{{\rm d}r}g(v_1,r){\rm d}v_1, 
\end{align*}
where (a) is obtained using Leibniz's integral rule.
Similarly, we get
\begin{align*}
\frac{{\rm d}^3}{{\rm d}r^3}f_1(r)&= 4\frac{{\rm d}}{{\rm d}r}g(r,r) +  \int_0^r \frac{{\rm d}^2}{{\rm d}r^2} g(r,v_2){\rm d}v_2 +   \int_0^r \frac{{\rm d}^2}{{\rm d}r^2}g(v_1,r){\rm d}v_1, \\
\frac{{\rm d}^4}{{\rm d}r^4}f_1(r)&= 6\frac{{\rm d}^2}{{\rm d}r^2}g(r,r) +  \int_0^r \frac{{\rm d}^3}{{\rm d}r^3} g(r,v_2){\rm d}v_2 +   \int_0^r \frac{{\rm d}^3}{{\rm d}r^3}g(v_1,r){\rm d}v_1.
\end{align*}
Thus, in general, we have
\begin{align*}
\frac{{\rm d}^d}{{\rm d}r^d}f_1(r)=2(d-1)\frac{{\rm d}^{(n-2)}}{{\rm d}r^{(n-2)}}g(r,r)+ \int_0^r \frac{{\rm d}^{(n-1)}}{{\rm d}r^{(n-1)}} g(r,v_2){\rm d}v_2 +   \int_0^r \frac{{\rm d}^{(n-1)}}{{\rm d}r^{(n-1)}}g(v_1,r){\rm d}v_1. 
\end{align*}
Following similar steps, we obtain the $d$-fold derivative of $f_2$ w.r.t. $r$ as 
\begin{align*}
\frac{{\rm d}^d}{{\rm d}r^d}f_2(r)=2(d-1)\frac{{\rm d}^{(n-2)}}{{\rm d}r^{(n-2)}}g(r,r)- \int_r^\infty \frac{{\rm d}^{(n-1)}}{{\rm d}r^{(n-1)}} g(r,v_2){\rm d}v_2 -\int_r^\infty \frac{{\rm d}^{(n-1)}}{{\rm d}r^{(n-1)}}g(v_1,r){\rm d}v_1. 
\end{align*}
Subtracting $\frac{{\rm d}^d}{{\rm d}r^d}f_2(r)$ from $\frac{{\rm d}^d}{{\rm d}r^d}f_1(r)$, we get 
\begin{align}
\frac{{\rm d}^d}{{\rm d}r^d}  \left( f_1(r) -  f_2(r)\right)=\int_0^\infty \frac{{\rm d}^{(d-1)}}{{\rm d}r^{(d-1)}} g(r,v_2){\rm d}v_2 + \int_0^\infty \frac{{\rm d}^{(d-1)}}{{\rm d}r^{(d-1)}}g(v_2,r){\rm d}v_1.
\label{eq:derivative_f1f2}
\end{align}
Now, we obtain the $(d-1)$-th derivative of $g(r,v_2)$ at $r=0$ as  
\begin{align*}
\frac{{\rm d}^{(d-1)}}{{\rm d}r^{(d-1)}} g(r,v_2)\bigg|_{r=0}&=2\pi C_{d,2}\frac{{\rm d}^{(d-1)}}{{\rm d}r^{(d-1)}}\int_{0}^{\pi}\exp(-\lambda U(r,v_2,u))(r_1v_2)^{d-1}(\sin u)^{d-2}{\rm d}u\bigg|_{r=0}\\
&=2\pi C_{d,2}\int_{0}^{\pi}\frac{{\rm d}^{(d-1)}}{{\rm d}r^{(d-1)}}\exp(-\lambda U(r,v_2,u))(rv_2)^{d-1}\bigg|_{r=0}(\sin u)^{d-2}{\rm d}u\\ 
&=2\pi(d-1)! C_{d,2}\int_{0}^{\pi}\exp(-\lambda U(0,v_2,u))v_2^{d-1}(\sin u)^{d-2}{\rm d}u \\
&\stackrel{(a)}{=}2\pi(d-1)!C_{d,2}v_2^{d-1}\exp(-\lambda\kappa_d v_2^d)\int_{0}^{\pi}(\sin u)^{d-2}{\rm d}u\\
&\stackrel{(b)}{=}d\pi^d\frac{\Gamma(d)}{\Gamma(\frac{d}{2})}\frac{1}{\Gamma\left(\frac{d}{2}+1\right)}v_2^{d-1}\exp(-\lambda\kappa_d v_2^d),
\end{align*}
where (a) follows due to $U(0,v_2,u)=\kappa_dv_2^d$ and (b) follows using $\int_0^\pi(\sin u)^{d-2}{\rm d}u=\sqrt{\pi}\frac{\Gamma(\frac{d-1}{2})}{\Gamma(\frac{d}{2})}$ \cite[Eq. 3.62.5]{gradshteyn2014table}.
Now, using the above expression along with $g(r,x)=g(x,r)$ and 
\begin{align*}
\int_0^\infty v^{d-1}\exp(-\lambda\kappa_d v^d){\rm d}v=\frac{1}{d\lambda\kappa_d}\int_0^\infty \exp(-t){\rm d}t=\frac{1}{d\lambda\kappa_d}=\frac{\Gamma(\frac{d}{2}+1)}{d\lambda\pi^{\frac{d}{2}}},
\end{align*} 
we can write \eqref{eq:derivative_f1f2} at $r=0$ as 
\begin{align}
\frac{{\rm d}^d}{{\rm d}r^d}  \left( f_1(r) -  f_2(r)\right)\bigg|_{r=0}&=\frac{1}{\lambda}2\pi^{\frac{d}{2}}\frac{\Gamma(d)}{\Gamma(\frac{d}{2})}.
\label{eq:Derivetive_Cov_2ndTerm}
\end{align}
Finally, the substitution of \eqref{eq:Derivetive_Cov_1stTerm} and \eqref{eq:Derivetive_Cov_2ndTerm} in \eqref{eq:derivative_of_Cov_1} completes the proof.
\end{proof}
\subsection{Approximate CDF of $R_o$}
\label{subsec:ApproximateCDF}
Now, in the following theorem we determine the c.f.~of the approximated CDF of $R_o$, which is the main result of this section.
\begin{thm}
\label{thm:App_CDF_R_Typical}
For the homogeneous PPP with intensity $\lambda$ on $\mathbb{R}^d$, the approximate CDF of the distance $R_o$ from the nucleus to a uniformly random point in the typical cell $V_o$ is 
\begin{align}
F_{R_o}(r)\approx 1-\exp(-\rho_d\lambda\kappa_d r^d),
\label{eq:CDF_R_apprx}
\end{align}
 where $\rho_d$ is the c.f. obtained by matching the $d-th$ derivative of \eqref{eq:CDF_R_apprx} with that of the second-order Taylor series expansion of the exact CDF of $R_o$ at $r=0$ and is given by
\begin{align}
\rho_d= 1+\frac{\mathtt{Var}[\upsilon_d(V_o)]}{\mathbb{E}[\upsilon_d(V_o)]^2}.
\label{eq:CorrectionFactor}
\end{align}
\end{thm}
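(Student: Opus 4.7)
The plan is to start from the exact identity
$$F_{R_o}(r) = \mathbb{E}\!\left[\frac{\upsilon_d(\mathcal{B}_r(o) \cap V_o)}{\upsilon_d(V_o)}\right],$$
which is simply the probability, conditional on $V_o$, that a uniformly distributed point in $V_o$ lies within distance $r$ of the nucleus (as in the proof of Theorem \ref{thm:CDF_R_Crofton}, but without invoking the 0-cell--typical-cell relation \eqref{eq:Relation_Vo_V}). I would then apply the standard second-order delta-method expansion of $X/Y$ about $(\mathbb{E}[X], \mathbb{E}[Y])$, namely
$$\mathbb{E}[X/Y] \;\approx\; \frac{\mathbb{E}[X]}{\mathbb{E}[Y]} - \frac{\mathtt{Cov}(X,Y)}{\mathbb{E}[Y]^2} + \frac{\mathbb{E}[X]\,\mathtt{Var}(Y)}{\mathbb{E}[Y]^3},$$
with $X = \upsilon_d(\mathcal{B}_r(o) \cap V_o)$ and $Y = \upsilon_d(V_o)$. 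All three quantities on the right-hand side are available in closed form from the preparatory results: $\mathbb{E}[Y] = 1/\lambda$, while Lemmas \ref{lemma:Moments_Volume_PVCell}--\ref{lemma:Cov_Int_Ball_PVCell_1} supply $\mathtt{Var}(Y)$, $\mathbb{E}[X]$, and $\mathtt{Cov}(X,Y)$ respectively.

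The next step is to differentiate both sides $d$ times in $r$ and evaluate at $r=0$. Using the elementary identity $\frac{\mathrm{d}^d}{\mathrm{d}r^d}(1 - e^{-c r^d})\big|_{r=0} = c\,d!$ valid for any constant $c$, the $d$-th derivative of the proposed approximation $1 - \exp(-\rho_d \lambda \kappa_d r^d)$ equals $\rho_d \lambda \kappa_d \, d!$. On the right-hand side, the first term equals $1 - e^{-\lambda \kappa_d r^d}$ after using $\mathbb{E}[Y] = 1/\lambda$, contributing $\lambda \kappa_d \, d!$; the covariance term contributes $0$ by Lemma \ref{lemma:Cov_derivative}; and the third term simplifies to $\lambda^2 \,\mathtt{Var}[\upsilon_d(V_o)]\,(1 - e^{-\lambda \kappa_d r^d})$, contributing $\lambda^3 \kappa_d \, d!\, \mathtt{Var}[\upsilon_d(V_o)]$.

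Equating these two expressions and dividing through by $\lambda \kappa_d \, d!$ then yields
$$\rho_d \;=\; 1 + \lambda^2 \,\mathtt{Var}[\upsilon_d(V_o)] \;=\; 1 + \frac{\mathtt{Var}[\upsilon_d(V_o)]}{\mathbb{E}[\upsilon_d(V_o)]^2},$$
which is exactly \eqref{eq:CorrectionFactor}. The only genuinely delicate point in the argument is the vanishing of the $d$-th derivative of the covariance term at $r=0$, since a nonzero contribution there would introduce an extra additive term in $\rho_d$ that does not fit the simple exponential form of the approximation; however, this is precisely what Lemma \ref{lemma:Cov_derivative} was established to provide, so no additional heavy lifting is required and the remainder reduces to bookkeeping of elementary derivatives of $1 - e^{-c r^d}$.
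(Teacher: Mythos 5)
Your proposal is correct and follows essentially the same route as the paper: the same second-order Taylor (delta-method) expansion of $\mathbb{E}[X/Y]$ with $X=\upsilon_d(\mathcal{B}_r(o)\cap V_o)$ and $Y=\upsilon_d(V_o)$, the same use of Lemmas \ref{lemma:Moments_Volume_PVCell}--\ref{lemma:Cov_derivative}, and the same matching of $d$-th derivatives at $r=0$. The only cosmetic difference is that the paper groups the first and third terms into $(1-e^{-\lambda\kappa_d r^d})\bigl[1+\mathtt{Var}[\upsilon_d(V_o)]/\mathbb{E}[\upsilon_d(V_o)]^2\bigr]$ before differentiating, whereas you track the three contributions separately; the bookkeeping is identical.
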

\begin{proof}
The second order Taylor series expansion of the bivariate function $f(Z_1,Z_2)=\frac{Z_1}{Z_2}$ around the mean ($\bar{z}_1,\bar{z}_2$) can be written as
\begin{align*}
f(Z_1,Z_2)\approx\frac{\bar{z}_1}{\bar{z}_2} + \frac{1}{\bar{z}_2}(Z_1-\bar{z}_1) -\frac{\bar{z}_1}{\bar{z}_2^2}(Z_2-\bar{z}_2)+\frac{1}{\bar{z}_2^2}(Z_1-\bar{z}_1)(Z_2-\bar{z}_2) + \frac{\bar{z}_1}{\bar{z}_2^3}(Z_2-\bar{z}_2)^2.
\end{align*}
Taking expectation of $f(Z_1,Z_2)$ w.r.t. $Z_1$ and $Z_2$, we get
\begin{equation}
\mathbb{E}[f(Z_1,Z_2)]\approx\frac{\bar{z}_1}{\bar{z}_2} - \frac{1}{\bar{z}_2^2}\mathtt{Cov}[z_1,z_2] + \frac{\bar{z}_1}{\bar{z}_2^3}\mathtt{Var}[z_2].
\label{eq:TaylorExpansion}
\end{equation} 
The CDF of $R_o$ is 
$$F_{R_o}(r)=\mathbb{E}\left[\frac{\upsilon_d(\mathcal{B}_r(o)\cap V_o)}{\upsilon_d(V_o)}\right].$$
 Therefore, using \eqref{eq:TaylorExpansion},  the second-order Taylor series expansion of $F_{R_o}(r)$  around the mean ($\E[\upsilon_d(\mathcal{B}_r(o)\cap V_o)],\E[\upsilon_d(V_o)]$) can be written as 
\begin{align*}
F_{R_o}(r)\approx\frac{\mathbb{E}[\upsilon_d(\mathcal{B}_r(o)\cap V_o)]}{\mathbb{E}[\upsilon_d(V_o)]}\left[1+\frac{\mathtt{Var}[\upsilon_d(V_o)]}{\mathbb{E}[\upsilon_d(V_o)]^2}\right]-\frac{\mathtt{Cov}[\upsilon_d(\mathcal{B}_r(o)\cap V_o),\upsilon_d(V_o)]}{\mathbb{E}[\upsilon_d(V_o)]^2}.
\end{align*}
Using Lemma \ref{lemma:Moments_Volume_PVCell} and Lemma \ref{lemma:Moments_Volume_Int_Ball_PVCell}, we obtain
\begin{align}
F_{R_o}(r) \approx & \left(1-\exp(-\lambda\kappa_d r^d)\right)\left[1+\frac{\mathtt{Var}[\upsilon_d(V_o)]}{\mathbb{E}[\upsilon_d(V_o)]^2}\right]-\frac{\mathtt{Cov}[\upsilon_d(\mathcal{B}_r(o)\cap V_o),\upsilon_d(V_o)]}{\mathbb{E}[\upsilon_d(V_o)]^2}.\label{eq:Cov_Int_Ball_PVCell_1}
\end{align}
Now, as $1-\exp(-\rho_d\lambda\kappa_d r^d)$ is considered for the approximation, we determine the c.f.~$\rho_d$ by matching the $d$-th derivatives of $1-\exp(-\rho_d\lambda\kappa_d r^d)$ and $F_{R_o}(r)$ at $r=0$ as 
\begin{align*}
\rho_d=\frac{1}{d!\lambda\kappa_d}\frac{{\rm d}^d}{{\rm d}r^d}F_{R_o}(r)\bigg|_{r=0}.
\end{align*}
Therefore, using  \eqref{eq:Cov_Int_Ball_PVCell_1} and Lemma \ref{lemma:Cov_derivative} we have
\begin{align*}
\rho_d= 1+\frac{\mathtt{Var}[\upsilon_d(V_o)]}{\mathbb{E}[\upsilon_d(V_o)]^2}.
\end{align*}
This completes the proof.
\end{proof}
Before giving the numerical validation of the approximated CDF of $R_o$, we present  the approximated $n$-th moment of the distance $R_o$ and some useful observations about the c.f.~in the following corollaries.
\begin{cor}
\label{cor:Mean_R}
For the homogeneous PPP with intensity $\lambda$ on $\mathbb{R}^d$, the $n$-th moment of the distance $R_o$ from the nucleus to a uniformly random point in the typical cell $V_o$ is approximately 
\begin{align}
\E[R_o^n]\approx\frac{\Gamma\left(1+\frac{n}{d}\right)}{\left(\rho_d\lambda\kappa_d\right)^{\frac{n}{d}}}.
\label{eq:Mean_Ro_Typical}
\end{align}
\end{cor}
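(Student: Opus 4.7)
The plan is to obtain the approximate moments as a direct consequence of the approximate CDF derived in Theorem~\ref{thm:App_CDF_R_Typical}. Since that theorem gives $F_{R_o}(r)\approx 1-\exp(-\rho_d\lambda\kappa_d r^d)$, the right-hand side is precisely a Weibull distribution with shape parameter $d$ and scale parameter $(\rho_d\lambda\kappa_d)^{-1/d}$, for which the moments are classical. Thus the corollary reduces to a one-line integral evaluation.

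Concretely, I would start from the tail-integral formula
\begin{equation*}
\E[R_o^n]=\int_0^\infty n r^{n-1}\bigl(1-F_{R_o}(r)\bigr)\,{\rm d}r,
\end{equation*}
substitute the approximation $1-F_{R_o}(r)\approx\exp(-\rho_d\lambda\kappa_d r^d)$, and then apply the change of variable $t=\rho_d\lambda\kappa_d r^d$ so that $r=\bigl(t/(\rho_d\lambda\kappa_d)\bigr)^{1/d}$ and $dr=\frac{1}{d}(\rho_d\lambda\kappa_d)^{-1/d}t^{1/d-1}{\rm d}t$. After collecting powers of $t$, the integral reduces to
\begin{equation*}
\E[R_o^n]\approx\frac{1}{(\rho_d\lambda\kappa_d)^{n/d}}\int_0^\infty t^{n/d}e^{-t}\,{\rm d}t=\frac{\Gamma(1+n/d)}{(\rho_d\lambda\kappa_d)^{n/d}},
\end{equation*}
which is exactly \eqref{eq:Mean_Ro_Typical}.

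This is essentially the same derivation that produces Corollary~\ref{cor:Mean_Ro} from Theorem~\ref{thm:CDF_R_Crofton}, with the sole modification that the rate $\lambda\kappa_d$ is replaced by $\rho_d\lambda\kappa_d$; hence no new obstacle arises. Strictly speaking, the only subtle point worth flagging is that passing from an approximation of the CDF to an approximation of the moments via the tail-integral representation is legitimate because the tail $e^{-\rho_d\lambda\kappa_d r^d}$ is integrable against $r^{n-1}$ for every $n\ge 0$ and $d\ge 1$, so the tail-sum converges and the symbol ``$\approx$'' is inherited termwise from Theorem~\ref{thm:App_CDF_R_Typical}. I would state this briefly and then present the two-line computation above as the proof.
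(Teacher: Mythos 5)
Your derivation is correct and is exactly the computation the paper intends: the corollary is stated without proof because, once Theorem~\ref{thm:App_CDF_R_Typical} gives the Weibull-type approximation $1-\exp(-\rho_d\lambda\kappa_d r^d)$, the moments follow from the standard tail-integral formula precisely as in Corollary~\ref{cor:Mean_Ro}. Your change of variables and the resulting $\Gamma\left(1+\frac{n}{d}\right)\left(\rho_d\lambda\kappa_d\right)^{-n/d}$ match the paper's statement, so there is nothing to add.
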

\begin{cor}
\label{cor:CorrectionFactor_ratio}
For the homogeneous PPP with intensity $\lambda$ on $\mathbb{R}^d$, the CDF of the distance $R_o$ from the nucleus to a uniformly random point in the typical cell $V_o$ can be approximated as $1-\exp(-\lambda\kappa_d\rho_dr^d)$ where the c.f.~$\rho_d$ is equal to the ratio of the mean volumes of the 0-cell and the typical cell, i.e.,
\begin{align}
\rho_d= \frac{\mathbb{E}[\upsilon_d(\tilde{V}_o)]}{\mathbb{E}[\upsilon_d(V_o)]}.
\label{eq:CorrectionFactor_RatioVolumes}
\end{align}
\end{cor}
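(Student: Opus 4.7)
The plan is to start from the expression for the correction factor provided by Theorem \ref{thm:App_CDF_R_Typical} and rewrite it as a ratio of mean volumes by invoking the key identity \eqref{eq:Relation_Vo_V} that relates the 0-cell and the typical cell.

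First, I would rewrite the right-hand side of \eqref{eq:CorrectionFactor} purely in terms of moments of $\upsilon_d(V_o)$. Using $\mathtt{Var}[\upsilon_d(V_o)] = \mathbb{E}[\upsilon_d(V_o)^2] - \mathbb{E}[\upsilon_d(V_o)]^2$, the expression in Theorem \ref{thm:App_CDF_R_Typical} collapses to
\begin{equation*}
\rho_d = \frac{\mathbb{E}[\upsilon_d(V_o)^2]}{\mathbb{E}[\upsilon_d(V_o)]^2}.
\end{equation*}
Second, I would apply the relationship \eqref{eq:Relation_Vo_V} between the distributions of the 0-cell and the typical cell with the translation-invariant choice $f(V) = \upsilon_d(V)$. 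This immediately yields
\begin{equation*}
\mathbb{E}[\upsilon_d(\tilde{V}_o)] = \lambda\, \mathbb{E}^{o}\!\left[\upsilon_d(V_o)^2\right].
\end{equation*}
Third, I would use the well-known fact that the mean volume of the typical cell of a PPP of intensity $\lambda$ on $\mathbb{R}^d$ is $\mathbb{E}[\upsilon_d(V_o)] = 1/\lambda$ (which also follows as the $\ell\to\infty$ limit of Lemma \ref{lemma:Mean_IntBall_PVCell}). Substituting $\lambda = 1/\mathbb{E}[\upsilon_d(V_o)]$ into the previous display gives
\begin{equation*}
\mathbb{E}[\upsilon_d(\tilde{V}_o)] = \frac{\mathbb{E}[\upsilon_d(V_o)^2]}{\mathbb{E}[\upsilon_d(V_o)]},
\end{equation*}
so that the ratio on the right-hand side of \eqref{eq:CorrectionFactor_RatioVolumes} equals $\mathbb{E}[\upsilon_d(V_o)^2]/\mathbb{E}[\upsilon_d(V_o)]^2$, which matches the simplified form of $\rho_d$ obtained in the first step.

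There is essentially no hard step here; the proof is a direct composition of Theorem \ref{thm:App_CDF_R_Typical} with the Mecke-type identity \eqref{eq:Relation_Vo_V}. The only subtlety is to recognise that the correction factor is in fact the second-to-first moment ratio of $\upsilon_d(V_o)$, after which the 0-cell/typical cell size-biasing formula finishes the argument. The main value of the corollary is therefore interpretive rather than technical: it exposes $\rho_d$ as the classical size-biasing factor, explaining why $\rho_d\geq 1$ with equality only in the degenerate deterministic-volume limit, and foreshadowing the $\rho_d\to 1$ behaviour as $d\to\infty$ discussed in the sequel.
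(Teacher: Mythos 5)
Your proposal is correct. The paper's own proof is a one-liner: it quotes Mecke's identity $\mathbb{E}[\upsilon_d(\tilde{V}_o)]=\mathbb{E}[\upsilon_d(V_o)]+\mathtt{Var}[\upsilon_d(V_o)]/\mathbb{E}[\upsilon_d(V_o)]$ directly from the cited reference and substitutes it into \eqref{eq:CorrectionFactor}. You instead re-derive that identity from scratch by (i) recognising $\rho_d$ as the moment ratio $\mathbb{E}[\upsilon_d(V_o)^2]/\mathbb{E}[\upsilon_d(V_o)]^2$, (ii) applying the size-biasing relation \eqref{eq:Relation_Vo_V} with $f=\upsilon_d$ to get $\mathbb{E}[\upsilon_d(\tilde{V}_o)]=\lambda\,\mathbb{E}^{o}[\upsilon_d(V_o)^2]$, and (iii) using $\mathbb{E}[\upsilon_d(V_o)]=1/\lambda$. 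The underlying mathematics is identical --- both arguments rest on the same size-biasing fact --- but your version is self-contained within the paper's own toolkit (it reuses \eqref{eq:Relation_Vo_V}, which the paper already invoked in Theorem \ref{thm:CDF_R_Crofton}) rather than importing an external formula, and it makes the interpretation of $\rho_d$ as a second-to-first moment ratio explicit, which the paper leaves implicit.
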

\begin{proof}
 From \cite[Equation 2.5]{MECKE1999}, we have 
\begin{align*}
\mathbb{E}[\upsilon_d(\tilde{V}_o)]=\mathbb{E}[\upsilon_d(V_o)] + \frac{\mathtt{Var}[\upsilon_d(V_o)]}{\mathbb{E}[\upsilon_d(V_o)]}.
\end{align*} 
Substituting the above expression in \eqref{eq:CorrectionFactor} gives \eqref{eq:CorrectionFactor_RatioVolumes}.
\end{proof}
\begin{cor}
\label{cor:CDF_R_d_Infinity}
The c.f.~$\rho_d$ approaches one as $d$ approaches infinity, i.e., $\lim\limits_{d\to\infty}\rho_d=1$.
\end{cor}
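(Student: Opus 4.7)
The plan is to combine the variance characterization of $\rho_d$ from Theorem~\ref{thm:App_CDF_R_Typical} with the closed-form second moment of Lemma~\ref{lemma:Moments_Volume_PVCell} and carry out a Laplace-type asymptotic as $d\to\infty$. Since $\mathbb{E}[\upsilon_d(V_o)]=1/\lambda$ for every $d$, equation~\eqref{eq:CorrectionFactor} reduces the claim to showing that $\lambda^2\mathbb{E}[\upsilon_d(V_o)^2]\to 1$. Equivalently, via Corollary~\ref{cor:CorrectionFactor_ratio} and the Mecke identity $\mathbb{E}[\upsilon_d(\tilde{V}_o)]=\lambda\mathbb{E}[\upsilon_d(V_o)^2]$ obtained from \eqref{eq:Relation_Vo_V} with $f=\upsilon_d$, one shows $\lambda\mathbb{E}[\upsilon_d(\tilde{V}_o)]\to 1$, i.e.\ the mean volumes of the 0-cell and the typical cell become asymptotically equal---a quantitative version of the spherical-shape heuristic in high dimensions.

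First, I would non-dimensionalize the triple integral of Lemma~\ref{lemma:Moments_Volume_PVCell} by substituting $v_i=w_i(\lambda\kappa_d)^{-1/d}$, so that each single-ball volume $\kappa_d v_i^d$ becomes $w_i^d/\lambda$ and $U(v_1,v_2,u)$ depends on $d$ only through an overlap term. In these coordinates the radial weight $w^{d-1}e^{-w^d}$ concentrates sharply around $w=1$, and the angular kernel $(\sin u)^{d-2}$, normalized by $\int_0^\pi(\sin u)^{d-2}{\rm d}u=\sqrt{\pi}\,\Gamma(\frac{d-1}{2})/\Gamma(\frac{d}{2})$, becomes a Gaussian-type density concentrating at $u=\pi/2$ on the scale $O(1/\sqrt{d})$.

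Second, I would argue that on this dominant region the overlap vanishes. The two balls of radii $v_1,v_2$ that both pass through the origin are nearly orthogonal when $u\approx\pi/2$; the geometric constraints $\psi_1+\psi_2=\pi-u$ and $v_1^d\sin^d\psi_1=v_2^d\sin^d\psi_2$ then force both $\psi_i$ to stay uniformly bounded away from $\pi/2$, so that the overlap integrals $\kappa_d v_i^d\int_0^{\psi_i}\alpha_d\sin^d\psi\,{\rm d}\psi$ are exponentially small in $d$. Hence $\lambda U(v_1,v_2,u)\to w_1^d+w_2^d$ pointwise on the effective support, the double radial integral factorizes, and together with the prefactor $4\pi C_{d,2}/\kappa_d^2$ and the angular normalization one recovers $\lambda^2\mathbb{E}[\upsilon_d(V_o)^2]\to 1$.

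The main obstacle will be making this Laplace approximation uniform: one must control (i) the tail contributions from $u$ bounded away from $\pi/2$ or $w_i$ bounded away from $1$, (ii) the exchange of limit and integration in the overlap estimate, and (iii) the precise cancellation of the prefactor $4\pi C_{d,2}/\kappa_d^2$ against the angular normalization via Stirling's asymptotics. Since $\rho_d\geq 1$ always (because $\mathtt{Var}[\upsilon_d(V_o)]\geq 0$), it suffices to establish a matching upper bound $\rho_d\leq 1+o(1)$, so the proof reduces to a one-sided Laplace estimate quantifying the exponential smallness of the ball overlap in high dimensions.
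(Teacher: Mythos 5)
Your reduction is exactly the paper's: from \eqref{eq:CorrectionFactor} and $\mathbb{E}[\upsilon_d(V_o)]=1/\lambda$, the corollary is equivalent to $\mathtt{Var}[\upsilon_d(V_o)]\to 0$, i.e.\ $\lambda^2\mathbb{E}[\upsilon_d(V_o)^2]\to 1$ (and your Mecke identity $\mathbb{E}[\upsilon_d(\tilde{V}_o)]=\lambda\mathbb{E}^o[\upsilon_d(V_o)^2]$ from \eqref{eq:Relation_Vo_V} is a correct restatement). The divergence is in how that fact is established: the paper simply cites \cite[Theorem 3.1]{alishahi2008}, which is precisely the statement that the typical-cell volume degenerates (variance tends to zero) in high dimensions, and is done in two lines. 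You instead propose to reprove that theorem from scratch by a Laplace analysis of the explicit second-moment integral in Lemma~\ref{lemma:Moments_Volume_PVCell}. That is a legitimately different, self-contained route, and your geometric picture is the right one --- the $(\sin u)^{d-2}$ weight concentrates at $u=\pi/2$, where two balls through the origin are nearly orthogonal and their lens-shaped intersection is exponentially small relative to $\kappa_d v^d$, so $\exp(-\lambda U)$ factorizes and the prefactors cancel by Stirling.

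The gap is that everything you defer to ``making the Laplace approximation uniform'' is the entire analytic content of the cited theorem; as written you have proved only the (easy) reduction that the paper also performs. One concrete place where the sketch is more delicate than you suggest: the claim that the constraints $\psi_1+\psi_2=\pi-u$ and $v_1^d\sin^d\psi_1=v_2^d\sin^d\psi_2$ force both $\psi_i$ away from $\pi/2$ fails when the radii are very unequal --- if $v_1\gg v_2$ then $\psi_1\to 0$ and $\psi_2\to\pi-u$, and at $u=\pi/2$ the overlap term $\kappa_d v_2^d\int_0^{\psi_2}\alpha_d\sin^d\psi\,{\rm d}\psi$ approaches half the smaller ball's volume rather than being exponentially small. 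This regime lies in the radial tails ($w_1\not\approx w_2$), so it should be controllable, but it means the ``exponentially small overlap'' estimate is not uniform over the domain and must be coupled with the radial tail bound rather than handled separately. If you intend a genuinely new proof you must carry out items (i)--(iii) of your own list; otherwise the honest and much shorter argument is the paper's: invoke \cite[Theorem 3.1]{alishahi2008} for $\mathtt{Var}[\upsilon_d(V_o)]\to 0$ and conclude from \eqref{eq:CorrectionFactor}.
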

\begin{proof}
Using \cite[Theorem 3.1]{alishahi2008}, we can write
\begin{align*}
\lim_{d\to\infty}\mathtt{Var}[\upsilon_d(V_o)]=0.
\end{align*}
Since, the mean volume of the PV cell is $\lambda^{-1}$ for any $d$, the proof directly follows using \eqref{eq:CorrectionFactor} and above result.
\end{proof}
\begin{figure}
 \centering
\includegraphics[width=.55\textwidth]{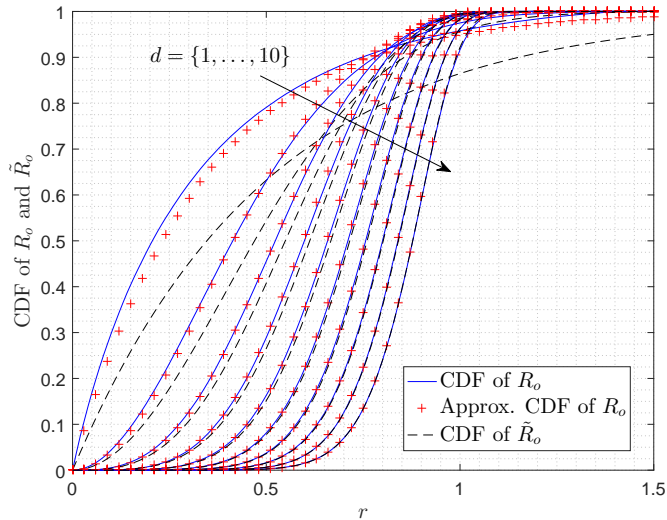}\vspace{-.3cm}
\caption{CDF of $R_o$ and $\tilde{R}_o$ for unit-intensity PPP on $\mathbb{R}^d$ where $d\in\{1,\dots,10\}$. The CDF of $\tilde{R}_o$ and approximate CDF of $R_o$ are given in Theorem \ref{thm:CDF_R_Crofton} and Theorem \ref{thm:App_CDF_R_Typical}, respectively.}
\label{fig:Approx_CDF_R}
\end{figure}

\begin{remark}
From  \eqref{eq:Mean_Ro_crofton} and  \eqref{eq:Mean_Ro_Typical}, it is clear that the ratio of the means of $\tilde{R}_o$ and $R_o$ is approximately $\sqrt[d]{\rho_d}$. Therefore, using Corollary \ref{cor:CorrectionFactor_ratio}, we can infer that the ratio of the means of $\tilde{R}_o$ and $R_o$ is approximately equal to the $d$-th root of the ratio of the mean volumes of the 0-cell $\tilde{V}_o$ and the typical cell $V_o$. In other words, the distance from the nucleus to a uniformly random point in the typical cell scales with the distance from the nucleus to a uniformly random point in the 0-cell by a factor equal to the $d$-th root of the ratio of the mean volumes of the 0-cell $\tilde{V}_o$ and the typical cell $V_o$.

\end{remark}

\subsection{Numerical Comparisons}
\label{sec:Numerical_Results_Appx}
For the numerical evaluation of the approximated CDF of $R_o$, we obtain the c.f.~$\rho_d$ using \eqref{eq:CorrectionFactor} for which the mean and variance of the volume of the typical cell are evaluated using Lemma \ref{lemma:Moments_Volume_PVCell}. Fig. \ref{fig:Approx_CDF_R}  validates the accuracy of the approximated CDF of $R_o$ by comparing it with the Monte Carlo simulations for the cases of $d\in\{1,\dots,10\}$.  
Fig. \ref{fig:Approx_CDF_R} clearly indicates that the CDF of $R_o$ gradually approaches that of $\tilde{R}_o$ as $d$ increases. 
Further, Table \ref{table:Accuracy_CDF_Ro} verifies the accuracy of the approximated mean and variance of $R_o$ (obtained using Corollary \ref{cor:Mean_R}) for $d\in\{1,\dots,10\}$.
For $d=2$, the obtained mean value of $R_o$ is $0.442$ which is also close to the mean values $0.438$ and $0.447$ obtained using the curve-fitted  c.f.s~ $13/10$ and $5/4$ of \cite{Haenggi2017} and  \cite{Martin2017_Meta}, respectively.
\begin{table}
\centering
\caption{Accuracy of Approximated Mean and Variance of $R_o$.}\vspace{.1cm}
\label{table:Accuracy_CDF_Ro}
\scriptsize{
\begin{tabular}{|l|l|l|l|l|l|l|l|l|l|l|l|} 
\hline
\multicolumn{2}{|l|}{$d$}     		& 1     & 2     & 3     & 4     & 5     & 6     & 7     & 8     & 9     & 10       \\ 
\hline
\multicolumn{2}{|l|}{$\rho_d$} 		& 1.500  & 1.285  & 1.171  & 1.128  & 1.079 & 1.062  & 1.043  & 1.032 & 1.029 & 1.018  \\ 
\hline
\multirow{2}{*}{$\mathbb{E}[R_o]$} & Exact  & 0.305 & 0.445 & 0.529 & 0.595 & 0.651 & 0.701 & 0.749 & 0.798 & 0.831 & 0.873  \\ 
\cline{2-12}
                                   & Approx. & 0.333 & 0.442 & 0.524 & 0.591 & 0.648 & 0.698 & 0.745 & 0.789 & 0.829 & 0.862  \\ 
\hline
\multirow{2}{*}{$\mathtt{Var}[R_o]$} & Exact  & 0.090 & 0.058 & 0.038 & 0.028 & 0.022 & 0.019 & 0.016 & 0.014 & 0.013 & 0.012  \\ 
\cline{2-12}
                                   & Approx. & 0.111 & 0.053 & 0.036 & 0.028 & 0.022 & 0.018 & 0.015 & 0.013 & 0.012 & 0.011  \\
\hline
\end{tabular}}
\end{table}

\section{Limiting Shape of Large PV Cells}
\label{sec:LimitingShape}
Thus far, we have presented an exact characterization of the CDFs of $\tilde{R}_o$ and $R_o$ in Sections \ref{sec:CroftonCell} and \ref{sec:TypicalCell} and a closed-form approximation for the multi-integral exact expression for the CDF of $R_o$ in Section \ref{sec:Approximate_CDF}. 
It is worth noting that the conditioning on the $k$ points of $\Phi$ in the $\mathcal{B}_{2\ell}(o)$, defined as the domain configuration $\mathcal{C}_\ell^k$ (see \eqref{eq:Domain_Confg_Def}), allowed us to construct the set of surfaces of the spherical caps $\{L_i\}_{i=1}^{k}$ on the ball $\mathcal{B}_\ell(o)$ as in \eqref{eq:Arc_DomainCong}. This helps in determining the conditional volume of the typical cell $V_o$ and thus the conditional CDF of $R_o$. 
It is easy to observe that some points of the domain configuration $\mathcal{C}_\ell^k$ are the closest points on some boundaries of the typical cell $V_o$ and thus the lines joining them to  origin are perpendicular to the corresponding boundaries.  Further, these points are also the midpoints of the chords formed by the corresponding spherical caps. This implies that these surfaces of spherical caps completely lie outside the typical cell $V_o$ (see Fig.~\ref{fig:Illustration} for $d=2$). 
Therefore, it is quite straightforward to see that the typical cell is completely contained within $\mathcal{B}_\ell(o)$ only if the set $\{L_i\}_{i=1}^{k}$ completely covers the boundary of $\mathcal{B}_\ell(o)$. Using this fact, in this section, we provide an alternate proof to the well-known spherical property of $d$-dimensional PV cells containing a large inball.

 Let the point $\tilde{\textbf{x}}_0\triangleq(R,\mytheta_0)$ denote the nearest point on the boundary of the typical cell $V_o$ to its nucleus. Therefore, $R$ is the radius of the largest ball $\mathcal{B}_{R}(o)$ contained within the typical  cell $V_o$, henceforth called the inradius of the cell. In this construction, it is evident that the nearest point $\mathbf{x}_0$ in $\Phi$ from the nucleus of $V_o$ (i.e., the origin) is at $(2R,\mytheta_0)$ such that $\|\tilde{\mathbf{x}}_0\|=\frac{1}{2}\|\mathbf{x}_0\|=R$. Note that the results presented in the following are conditioned on the inradius $R$.

Let $\mathcal{A}(r,\epsilon)$ denote the annulus formed by two balls of radii $r$ and $r+\epsilon$ co-centered at the origin. Now, consider the domain configuration $\mathcal{C}_{R}^k=\{\mathbf{\tilde{x}}_i\}_{i=1}^k$ as the set containing the mid-point of lines joining the nucleus of $V_o$ and the points in $\Phi\cap \mathcal{A}(2R,2\epsilon)$ given $\Phi(\mathcal{A}(2R,2\epsilon))=k$. Fig.~\ref{fig:PointProb_PVCell} illustrates a potential configuration of $\mathcal{C}_{R}^2$ for the case of $d=2$. By the Poisson property, the $k$ points of $\mathcal{C}_R^{k}$ are distributed uniformly at random independently of each other in the annulus $\mathcal{A}(R,\epsilon)$ such that the CDF of  $\|\tilde{\mathbf{x}}_i\|=l_i$, for $\forall i$, conditioned on $R$  is
\begin{align}
F_{l_i}(l)=\frac{l^d-R^d}{(R+\epsilon)^d-R^d},~~R\leq l\leq R+\epsilon.
\label{eq:CDF_li}
\end{align} 
\begin{figure}[t]
\centering
\includegraphics[width=.6\textwidth]{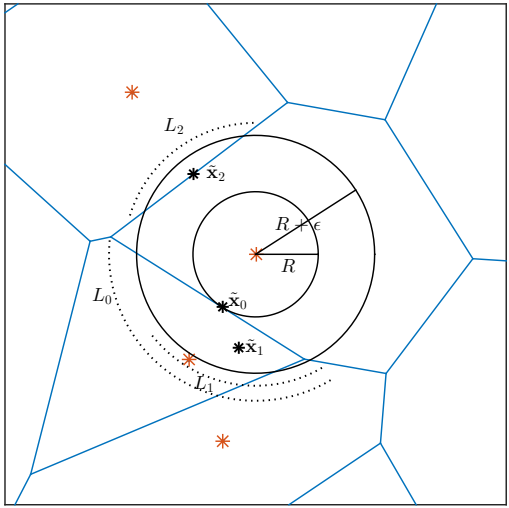}\vspace{-.7cm}
\caption{Typical cell with inradius $R$ for the case of $d=2$.}
\label{fig:PointProb_PVCell}
\end{figure}
We define the set of $k+1$ spherical caps $\{L_i\}_{i=0}^k$ corresponding to points $\{\tilde{\mathbf{x}}\}_{i=0}^k=\{ \tilde{\mathbf{x}}_0\cup\mathcal{C}_R^k\}$ on the $\mathcal{B}_{R+\epsilon}(o)$ with heights equal to $\epsilon$ for $i=0$ and $R+\epsilon-l_i$ for $i=1,\dots,k$.   
 The surface area of the spherical cap $L_i$ is \cite{li2011concise}
\begin{align}
 S_i=\begin{cases}
\frac{1}{2}\chi_d (R+\epsilon)^{d-1} I_{1-\frac{R^2}{(R+\epsilon)^2}}\left(\frac{d-1}{2},\frac{1}{2}\right),& \text{for}~i=0\\
\frac{1}{2}\chi_d (R+\epsilon)^{d-1}  I_{1-\frac{l_i^2}{(R+\epsilon)^2}}\left(\frac{d-1}{2},\frac{1}{2}\right),& \text{for}~i=1,\dots,k,  
\end{cases}
\end{align}
where $\chi_d=\frac{2\pi^{\frac{d}{2}}}{\Gamma(\frac{d}{2})}$ is the surface area of the unit radius ball in $\R^d$ and $I_z(a,b)=\frac{B_z(a,b)}{B(a,b)}$ such that $B(a,b)$ and $B_z(a,b)$ are the beta function and the incomplete beta function, respectively. Note that $0\leq S_i\leq S_0$ $\forall i$.
Since the points in $\mathcal{C}_R^k$ are i.i.d.~in $\mathcal{A}(R,\epsilon)$, the spherical caps $\{L_i\}_{i=1}^k$ of i.i.d.~surface areas are placed uniformly at random independently of each other on $\mathcal{B}_{R+\epsilon}(o)$. 
 
 Now, we evaluate the probability that the uniformly chosen point $(R+\epsilon,\myalpha)$ on the surface of $\mathcal{B}_{R_m+\epsilon}(o)$ belongs to the spherical cap $L_i$, for $i\in\{1,\dots,k\}$, as 
 \begin{align}
 p&=\mathbb{P}((R+\epsilon,\myalpha)~\text{belongs to the cap~} L_i~\text{of area}~ S_i)\nonumber\\
 &=\frac{1}{\chi_d (R+\epsilon)^{d-1}}\mathbb{E}[S_i]\nonumber\\
&\stackrel{(a)}{=}\frac{d}{2((R+\epsilon)^d-R^d)}\int_{R}^{R+\epsilon} I_{1-\frac{l^2}{(R+\epsilon)^2}}\left(\frac{d-1}{2},\frac{1}{2}\right) l^{d-1}{\rm d}l\nonumber\\
& \stackrel{(b)}=\frac{1}{2((R+\epsilon)^d-R^d)}\Bigg[ \frac{(R+\epsilon)^d}{B\left(\frac{d-1}{2},\frac{1}{2}\right)}B_{1-\frac{R^2}{(R+\epsilon)^2}}\left(\frac{d-1}{2},\frac{d+1}{2}\right)- R^d I_{1-\frac{R^2}{(R+\epsilon)^2}}\left(\frac{d-1}{2},\frac{1}{2}\right)\Bigg],
\label{eq:probability_belongs_to_cap_Li}
\end{align}  
where (a) follows using the pdf of $l_i$ which is obtained using \eqref{eq:CDF_li} and (b) follows using the steps given in Appendix \ref{app:probability_belongs_to_cap_Li}. Also note that the probability that  the uniformly chosen point $(R+\epsilon,\myalpha)$ on  the surface of $\mathcal{B}_{R+\epsilon}(o)$ belongs to the spherical cap $L_0$ is 
\begin{align}
p_0=\frac{1}{2}I_{1-\frac{R^2}{(R+\epsilon)^2}}\left(\frac{d-1}{2},\frac{1}{2}\right).
\label{eq:probability_belongs_to_cap_L0}
\end{align}

Let $K=\Phi\left(\mathcal{A}(2R,2\epsilon)\right)$. By definition, $K$ is Poisson with mean $\lambda\kappa_d((R+\epsilon)^d-R^d)$. Now to complete our argument, we evaluate the probability that the point on the boundary of $\mathcal{B}_{R+\epsilon}(o)$ does not belong to $V_o$ as  
\begin{align}
Q_d(R,\epsilon) &=\mathbb{P}((R+\epsilon,\myalpha)~\text{belongs to at least one of the caps}) \nonumber\\
&=1-(1-p_0)\mathbb{E}\left[(1-p)^K\right]\nonumber\\
&\stackrel{(a)}{=}1-\left(1-\frac{1}{2}I_{1-\frac{R^2}{(R+\epsilon)^2}}\left(\frac{d-1}{2},\frac{1}{2}\right)\right)\exp\Bigg(-\frac{1}{2}\lambda \kappa_d h(R,\epsilon)\Bigg),
\label{eq:Prob1_Out}
\end{align} 
where
\begin{equation}
h(R,\epsilon)=\frac{(R+\epsilon)^d}{B\left(\frac{d-1}{2},\frac{1}{2}\right)}B_{1-\frac{R^2}{(R+\epsilon)^2}}\left(\frac{d-1}{2},\frac{d+1}{2}\right)-R^d I_{1-\frac{R^2}{(R+\epsilon)^2}}\left(\frac{d-1}{2},\frac{1}{2}\right),
\end{equation}
and
 (a) directly follows using \eqref{eq:probability_belongs_to_cap_Li}, \eqref{eq:probability_belongs_to_cap_L0} and the probability generating function of the Poisson distribution with mean $\lambda\kappa_d((R+\epsilon)^d-R^d)$.
Now, in the following theorem we state the limiting case of \eqref{eq:Prob1_Out}.  
\begin{thm}
\label{thm:LargePVCell_Circular}
Given the inradius $R$, the probability that a point on the boundary of $\mathcal{B}_{R+\epsilon}(o)$ does not belong to the PV cell $V_o$ approaches one  as $R$ tends to infinity, i.e.,
\begin{equation}
\lim_{R\to\infty}Q_d(R,\epsilon)=1, ~~~~~~~~\forall \epsilon>0.
\label{eq:LargePVCell_Circular} 
\end{equation}
\end{thm}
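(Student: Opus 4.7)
The plan is to work directly with \eqref{eq:Prob1_Out}, which expresses
\[
1 - Q_d(R,\epsilon) \;=\; \bigl(1 - \tfrac{1}{2}I_{z_R}(\tfrac{d-1}{2},\tfrac{1}{2})\bigr)\,\exp\!\bigl(-\tfrac{1}{2}\lambda\kappa_d\,h(R,\epsilon)\bigr),
\]
with $z_R := 1 - R^2/(R+\epsilon)^2 \to 0$ as $R \to \infty$. Since the first factor already lies in $[0,1]$, the theorem reduces to showing that $h(R,\epsilon) \to \infty$ as $R \to \infty$ for every fixed $\epsilon > 0$ (and $d \geq 2$). Geometrically, this says that the expected total spherical-cap ``blocking measure'' contributed by Poisson points in the annulus $\mathcal{A}(2R,2\epsilon)$ grows without bound, so a typical point on $\partial\mathcal{B}_{R+\epsilon}(o)$ is eventually covered by at least one cap $L_i$.

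To expose this divergence I first collapse $h$ into a single non-negative integral. Substituting $u = 1 - v^2$ in each of the two incomplete beta integrals defining $h$ maps $u \in [0,z_R]$ to $v \in [t,1]$ with $t := R/(R+\epsilon)$; using $R^d = (R+\epsilon)^d t^d$ yields
\[
h(R,\epsilon) \;=\; \frac{2(R+\epsilon)^d}{B(\tfrac{d-1}{2},\tfrac{1}{2})}\int_t^1 (1-v^2)^{(d-3)/2}\,(v^d - t^d)\,dv,
\]
with a manifestly non-negative integrand. To get a lower bound I invoke the tangent-line inequality $v^d - t^d \geq d\,t^{d-1}(v-t)$ on $[t,1]$ (convexity of $v \mapsto v^d$), and the factorization $(1-v^2) = (1-v)(1+v)$: on $[t,1]$ with $t$ close to $1$, the factor $(1+v)^{(d-3)/2}$ is uniformly bounded below by a positive dimensional constant (separating the cases $d \geq 3$ and $d = 2$ according to the sign of the exponent). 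The change of variables $v = 1 - s\delta$ with $\delta := 1-t = \epsilon/(R+\epsilon)$ then reduces the remaining integral to $\int_0^1 s^{(d-3)/2}(1-s)\,ds = B(\tfrac{d-1}{2},2)$, which is finite for $d \geq 2$. Collecting constants gives
\[
h(R,\epsilon) \;\geq\; C_d\,t^{d-1}\,\epsilon^{(d+1)/2}\,(R+\epsilon)^{(d-1)/2}
\]
for some positive $C_d$. For $d \geq 2$ the right-hand side diverges as $R \to \infty$ (since $t \to 1$), hence $\exp(-\tfrac{1}{2}\lambda\kappa_d h(R,\epsilon)) \to 0$ and \eqref{eq:LargePVCell_Circular} follows.

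The principal obstacle is the singular factor $(1-v^2)^{(d-3)/2}$ at $v = 1$, which blows up in low dimensions (most pressingly $d = 2$), so a naive Taylor expansion around $v = 1$ is not valid uniformly over $[t,1]$. The split $(1-v)(1+v)$ used above is the key move: it isolates the singularity in the factor $(1-v)^{(d-3)/2}$, which after the rescaling $v = 1 - s\delta$ integrates cleanly against the linear factor $v - t = \delta(1-s)$ as a beta integral in $s$. This avoids delicate uniform asymptotics while still producing a bound sharp enough to drive $h(R,\epsilon)$ to infinity at the expected rate $R^{(d-1)/2}$.
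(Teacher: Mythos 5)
Your proof is correct, and it takes a genuinely different route from the paper's. The paper also reduces the theorem to showing $h(R,\epsilon)\to\infty$ (after noting the prefactor is bounded), but it then expands both incomplete beta functions in $\tilde h(R,\epsilon)=B(\tfrac{d-1}{2},\tfrac12)\,h(R,\epsilon)$ as binomial series in powers of $1-R^2/(R+\epsilon)^2$ and argues term by term about which powers of $R$ survive. That argument is delicate: the $k=0$ term (where $A_0-B_0=0$) and the $k=1$ term are both of order $R^{(d-1)/2}$ but carry opposite signs through the $(-1)^k$ factor, and the paper does not explicitly verify that the net leading coefficient is positive. Your substitution $u=1-v^2$, which collapses $\tilde h$ into the single integral $2(R+\epsilon)^d\int_t^1(1-v^2)^{(d-3)/2}(v^d-t^d)\,{\rm d}v$ with a manifestly non-negative integrand, sidesteps this cancellation issue entirely (I checked the identity: with $a=\tfrac{d-1}{2}$ one gets $B_{1-t^2}(a,b)=2\int_t^1(1-v^2)^{a-1}v^{2b-1}{\rm d}v$, and $2b-1$ equals $d$ and $0$ for the two terms, so factoring out $(R+\epsilon)^d$ via $R^d=(R+\epsilon)^dt^d$ gives exactly your expression). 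The tangent-line bound $v^d-t^d\ge d\,t^{d-1}(v-t)$, the split $(1-v^2)=(1-v)(1+v)$ to confine the singularity at $v=1$ to the factor $(1-v)^{(d-3)/2}$, and the rescaling $v=1-s\delta$ then yield the lower bound $C_d\,t^{d-1}\epsilon^{(d+1)/2}(R+\epsilon)^{(d-1)/2}$, which matches the growth rate $R^{(d-1)/2}$ implicit in the paper's expansion and diverges for all $d\ge2$. What the paper's series buys is, in principle, the exact leading constant; what your argument buys is a short, fully rigorous divergence proof with explicit positivity at every step. The only caveat is the restriction to $d\ge2$, but that is implicit in the paper as well (the expression \eqref{eq:Prob1_Out} degenerates for $d=1$), so it is not a gap.
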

\begin{proof}
We note that, for $\epsilon>0$, $I_{1-\frac{R^2}{(R+\epsilon)^2}}\left(\frac{d-1}{2},\frac{1}{2}\right)\to 0$ as $R\to\infty$. Therefore, in order to prove \eqref{eq:LargePVCell_Circular}, it is sufficient to show that the exponential term in \eqref{eq:Prob1_Out} tends to 0 as $R\to\infty$ for $\epsilon>0$, i.e.,
\begin{align}
\lim_{R\to\infty}{h}(R,\epsilon)=\infty.\nonumber
\end{align}
To this end, we multiply ${h}(R,\epsilon)$ with $B\left(\frac{d-1}{2},\frac{1}{2}\right)$ to obtain
\begin{align}
\tilde{h}(R,\epsilon)=(R+\epsilon)^d B_{1-\frac{R^2}{(R+\epsilon)^2}}\left(\frac{d-1}{2},\frac{d+1}{2}\right)-R^d B_{1-\frac{R^2}{(R+\epsilon)^2}}\left(\frac{d-1}{2},\frac{1}{2}\right).
\label{eq:Converegence_show}
\end{align}
We have 
\begin{equation}
 B_{1-\frac{R^2}{(R+\epsilon)^2}}\left(\frac{d-1}{2},a\right)=\int_0^{1-\frac{R^2}{(R+\epsilon)^2}}t^{\frac{d-1}{2}-1}(1-t)^{a-1} {\rm d}t.\nonumber
\end{equation}
Thus, using the binomial expansion of the term $(1-t)^{a-1}$, we get
\begin{align*}
B_{1-\frac{R^2}{(R+\epsilon)^2}}\left(\frac{d-1}{2},a\right)&=\int_0^{1-\frac{R^2}{(R+\epsilon)^2}}t^{\frac{d-1}{2}-1}\sum_{k=0}^{\infty} (-1)^k \frac{1}{k!}\prod_{l=0}^{k-1}(a-1-l)  t^k {\rm d}t\\
&=\sum_{k=0}^{\infty}(-1)^k \frac{1}{k!}\prod_{l=0}^{k-1}(a-1-l) \int_0^{1-\frac{R^2}{(R+\epsilon)^2}}t^{k+\frac{d-1}{2}-1}  {\rm d}t\\
&=\sum_{k=0}^{\infty}(-1)^k \frac{\prod_{l=0}^{k-1}(a-1-l)}{k!\left(k+\frac{d-1}{2}\right)}  \left(1-\frac{R^2}{(R+\epsilon)^2}\right)^{k+\frac{d-1}{2}}.  
\end{align*}
Let $A_k=\frac{1}{k!(k+\frac{d-1}{2})}\prod_{l=0}^{k-1}\left(\frac{d+1}{2}-1-l\right)$ and $B_k=\frac{1}{k!(k+\frac{d-1}{2})}\prod_{l=0}^{k-1}\left(\frac{1}{2}-1-l\right)$.  
Using the above series expansion of the incomplete beta function, we can rewrite \eqref{eq:Converegence_show} as
\begin{align*}
\tilde{h}(R,\epsilon)&=\sum_{k=0}^{\infty}(-1)^k \left[A_k(R+\epsilon)^d-B_kR^d\right]\left(1-\frac{R^2}{(R+\epsilon)^2}\right)^{k+\frac{d-1}{2}}\nonumber\\
&=\sum_{k=0}^{\infty}(-1)^k \left[(A_k-B_k)R^d + A_k\sum_{n=0}^{d-1} {d\choose n} R^n\epsilon^{d-n}\right]\frac{(2R\epsilon+\epsilon^2)^{k+\frac{d-1}{2}}}{(R+\epsilon)^{2k+d-1}}\nonumber\\
&=\sum_{k=0}^{\infty}(-1)^k \left[(A_k-B_k)R^{\frac{d+1}{2}-k} + A_k\sum_{n=0}^{d-1} {d\choose n} R^{n+\frac{1-d}{2}-k}\epsilon^{d-n}\right]\frac{(2\epsilon+R^{-1}\epsilon^2)^{k+\frac{d-1}{2}}}{(1+R^{-1}\epsilon)^{2k+d-1}}.\nonumber
\end{align*}
Now note that $A_k-B_k\geq 0$ for $k\leq\frac{d-1}{2}$.
Therefore, the terms in the above summation tend to infinity as $R$ tends to infinity for $k<\frac{d+1}{2}$. In addition, the terms converge to a constant for $k=\frac{d+1}{2}$ (if $d$ is odd) and to zero for $k>\frac{d+1}{2}$.  From this, it is clear that $\tilde{h}(R,\epsilon)\to\infty$  as $R\to\infty$. Therefore, we have ${h}(R,\epsilon)\to\infty$ as $R\to\infty$. 
\end{proof}
From Theorem \ref{thm:LargePVCell_Circular}, it is easy to see that the boundary of a PV cell $V_o$ must be contained within the annulus  $\mathcal{A}(R,\epsilon)$ as its inradius $R\to\infty$ for an arbitrarily small $\epsilon$. Hence PV cells with large inradii tend to be spherical. Therefore, the approach presented in this section provides an alternate proof for the well-known {\em spherical} nature of the PV cells having a large inball \cite{calka2005limit,Calka2002,miles1995heuristic}.  A realization of a PV cell $V_o$ with large inradius is shown in Fig.~\ref{fig:Illustration_largPVCell} for the case of $d=2$.                                                                                                                                                                                                                                                                                                            
\begin{figure}[h]
\centering
\includegraphics[width=.55\textwidth]{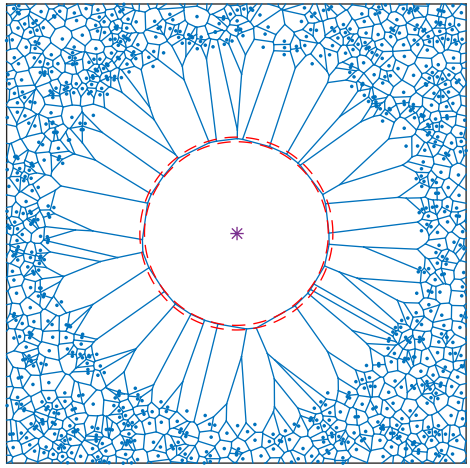}\vspace{-.6cm}
\caption{Illustration of a cell in $\mathbb{R}^2$ with large inradius.}
\label{fig:Illustration_largPVCell}
\end{figure}
\appendices
\section{Solution of Integrals in \eqref{eq:CDF_Ro_d1_Integrals}}
\label{app:CDF_Ro_d1_Integrals}
We have
\begin{align*}
F_{R_o}(r) = &\underbrace{\int_0^r\int_0^{r_2}8\lambda^2\exp(-2\lambda(r_1+r_2)){\rm d}r_1{\rm d}r_2}_{\mathtt{Int}_1}  +\underbrace{\int_r^\infty\int_0^{r}\frac{r+r_1}{r_1+r_2}8\lambda^2\exp(-2\lambda(r_1+r_2)){\rm d}r_1{\rm d}r_2}_{\mathtt{Int}_2} \\
& +\underbrace{\int_r^\infty\int_r^{r_2}\frac{2r}{r_1+r_2}8\lambda^2\exp(-2\lambda(r_1+r_2)){\rm d}r_1{\rm d}r_2}_{\mathtt{Int}_3}. \numberthis
\label{eq:CDF_Ro_d1_Integrals1}
\end{align*}
First of all, it is easy to show that $\mathtt{Int}_1$ reduces to 
\begin{equation}
\mathtt{Int}_1=1+\exp(-4\lambda r)-2\exp(-2\lambda r).\label{eq:I_1}
\end{equation}
Now, we have 
\begin{align*}
\mathtt{Int}_2 = & \underbrace{\int_r^\infty\int_0^{r}\frac{r}{r_1+r_2}8\lambda^2\exp(-2\lambda(r_1+r_2)){\rm d}r_1{\rm d}r_2}_{\mathtt{Int}_{21}}  +\underbrace{\int_r^\infty\int_0^{r}\frac{r_1}{r_1+r_2}8\lambda^2\exp(-2\lambda(r_1+r_2)){\rm d}r_1{\rm d}r_2}_{\mathtt{Int}_{22}}.
\end{align*}
By substituting $r_1+r_2=y$, we solve $\mathtt{Int}_{21}$ as 
\begin{align*}
\mathtt{Int}_{21}&=8\lambda^2 r\int_r^\infty\int_{r_2}^{r+r_2}\frac{1}{y}\exp(-2\lambda y){\rm d}y{\rm d}r_2\\
&=8\lambda^2 r\int_r^\infty\int_{r_2}^{\infty}\frac{1}{y}\exp(-2\lambda y){\rm d}y{\rm d}r_2-8\lambda^2 r\int_r^\infty\int_{r+r_2}^{\infty}\frac{1}{y}\exp(-2\lambda y){\rm d}y{\rm d}r_2\\
&=8\lambda^2 r\int_r^\infty\int_{2\lambda r_2}^{\infty}\frac{1}{z}\exp(-z){\rm d}z{\rm d}r_2-8\lambda^2 r\int_r^\infty\int_{2\lambda (r+r_2)}^{\infty}\frac{1}{z}\exp(-z){\rm d}z{\rm d}r_2\\
&=8\lambda^2r\int_r^\infty\mathtt{E}_1(2\lambda r_2){\rm d}r_2-8\lambda^2 r\int_{r}^\infty\mathtt{E}_1(2\lambda(r+r_2)){\rm d}r_2\\
&=8\lambda^2r\int_r^\infty\mathtt{E}_1(2\lambda r_2){\rm d}r_2-8\lambda^2 r\int_{2 r}^\infty\mathtt{E}_1(2\lambda u){\rm d}u,
\end{align*}
where $\mathtt{E}_1$ is an exponential integral function. 
From \cite[Eq. 5.22.8]{gradshteyn2014table}, we have 
\begin{equation}
\int_x^\infty\mathtt{E}_1(az){\rm d}z=\frac{1}{a}\exp(-ax)-x\mathtt{E}_1(ax).\label{eq:IntegralofE1}
\end{equation}
Therefore, we get
\begin{align*}
\mathtt{Int}_{21}=4\lambda r\exp(-2\lambda r)-4\lambda r\exp(-4\lambda r)-8\lambda^2 r^2\mathtt{E}_1(2\lambda r)+16\lambda^2 r^2\mathtt{E}_1(4\lambda r).\numberthis
\label{eq:I_21}
\end{align*}
Similarly, by substituting $r_1+r_2=y$, we solve $\mathtt{Int}_{22}$ as 
\begin{align*}
\mathtt{Int}_{22}&=8\lambda^2\int_r^\infty\int_{r_2}^{r+r_2}\frac{y-r_2}{y}\exp(-2\lambda y){\rm d}y{\rm d}r_2\\
&=8\lambda^2 \int_r^\infty \left(\int_{r_2}^{r+r_2}\exp(-2\lambda y){\rm d}y-r_2\int_{r_2}^{r+r_2}\frac{1}{y}\exp(-2\lambda y){\rm d}y\right){\rm d}r_2\\
&=8\lambda^2 \int_r^\infty \frac{1}{2\lambda}\left(\exp(-2\lambda r_2)-\exp(-2\lambda(r+r_2))\right)-r_2\left(\mathtt{E}_1(2\lambda r_2)-\mathtt{E}_1(2\lambda(r+r_2))\right){\rm d}r_2\\
&=2\left(1-\exp(-2\lambda r)\right)\exp(-2\lambda r)-8\lambda^2 \underbrace{\int_r^\infty r_2\mathtt{E}_1(2\lambda r_2){\rm d}r_2}_{\mathtt{Int}_{221}}+8\lambda^2\underbrace{\int_r^\infty r_2\mathtt{E}_1(2\lambda(r+r_2)){\rm d}r_2}_{\mathtt{Int}_{222}}.\numberthis
\label{eq:I_22}
\end{align*}
Now, using \eqref{eq:IntegralofE1} and the integration by parts, we solve $\mathtt{Int}_{221}$ as 
\begin{align*}
\mathtt{Int}_{221}&=\int_r^\infty r_2\mathtt{E}_1(2\lambda r_2){\rm d}r_2\\
&=r_2\int\mathtt{E}_1(2\lambda r_2){\rm d}r_2\mid_r^\infty - \int_r^\infty\left(\int\mathtt{E}_1(2\lambda r_2){\rm d}r_2\right){\rm d}r_2\\
&=\frac{r}{2\lambda}\exp(-2\lambda r)-r^2\mathtt{E}_1(2\lambda r) - \int_r^\infty \left(r_2\mathtt{E}_1(2\lambda r_2)-\frac{1}{2\lambda}\exp(-2\lambda r_2)\right){\rm d}r_2\\
&=\frac{r}{2\lambda}\exp(-2\lambda r)-r^2\mathtt{E}_1(2\lambda r) +\frac{1}{4\lambda^2 }\exp(-2\lambda r)-\mathtt{Int}_{221}\\
&=\frac{r}{4\lambda}\exp(-2\lambda r)+\frac{1}{8\lambda^2}\exp(-2\lambda r)-\frac{r^2}{2}\mathtt{E}_1(2\lambda r).\numberthis\label{eq:I_221}
\end{align*}
Now, 
\begin{align*}
\mathtt{Int}_{222}&=\int_r^\infty r_2\mathtt{E}_1(2\lambda(r+r_2)){\rm d}r_2\\
&=\frac{1}{4\lambda^2 }\int_{4\lambda r}^\infty(y-2\lambda r)\mathtt{E}_1(y){\rm d}y\\
&=\underbrace{\frac{1}{4\lambda^2 }\int_{4\lambda r}^\infty y\mathtt{E}_1(y){\rm d}y}_{A}-\underbrace{\frac{r}{2\lambda }\int_{4\lambda r}^\infty\mathtt{E}_1(y){\rm d}y}_{B}\\
&=\frac{1}{4\lambda^2}y\int\mathtt{E}_1(y){\rm d}y\mid_{4\lambda r}^\infty-\frac{1}{4\lambda^2}\int_{4\lambda 
r}^{\infty}\left(\int\mathtt{E}_1(y){\rm d}y\right){\rm d}y-B\\
&=\frac{r}{\lambda}\exp(-4\lambda r)-4r^2\mathtt{E}_1(4\lambda r)-\frac{1}{4\lambda^2}\int_{4\lambda r}^\infty\left(y\mathtt{E}_1(y)-\exp(-y)\right){\rm d}y-B\\
&=\frac{r}{\lambda}\exp(-4\lambda r)-4r^2\mathtt{E}_1(4\lambda r)+\frac{1}{4\lambda^2}\exp(-4\lambda r)-A-B\\
&\stackrel{(a)}{=}\frac{r}{2\lambda}\exp(-4\lambda r)+\frac{1}{8\lambda^2}\exp(-4\lambda r)-2r^2\mathtt{E}_1(4\lambda r)-\frac{r}{2\lambda}\left(\exp(-4\lambda r)-4\lambda r\mathtt{E}_1(4\lambda r)\right)\\
&=\frac{1}{8\lambda^2}\exp(-4\lambda r),
\numberthis\label{eq:I_222}
\end{align*}
where step (a) follows by substituting $A+B = \mathtt{Int}_{222}+2B$ and $B=\frac{r}{2\lambda}(\exp(-4\lambda r)-4\lambda r\mathtt{E}_1(4\lambda r))$.
Substituting \eqref{eq:I_221} and \eqref{eq:I_222} in \eqref{eq:I_22}, we get
\begin{align*}
\mathtt{Int}_{22}&=\exp(-2\lambda r)-2\lambda r\exp(-2\lambda r)-\exp(-4\lambda r)+4\lambda^2 r^2\mathtt{E}_1(2\lambda r).\numberthis
\label{eq:I_22a}
\end{align*}
Now, adding \eqref{eq:I_21} and \eqref{eq:I_22a}, we get
\begin{align*}
\mathtt{Int}_2&=\exp(-2\lambda r)+2\lambda  r\exp(-2\lambda r)-\exp(-4\lambda r)-4\lambda r\exp(-4\lambda r)-4\lambda^2 r^2\mathtt{E}_1(2\lambda r)+16\lambda^2 r^2\mathtt{E}_1(4\lambda r).\numberthis
\label{eq:I_2}
\end{align*}
Again substituting $r_1+r_2=y$ and using \eqref{eq:IntegralofE1}, we evaluate $\mathtt{Int}_3$ as 
\begin{align*}
\mathtt{Int}_3&=16\lambda^2 r\int_r^\infty\int_{r+r_2}^{2r_2}\frac{1}{y}\exp(-2\lambda y){\rm d}y{\rm d}r_2\\
&=16\lambda^2 r\int_r^\infty\mathtt{E}_1(2\lambda (r+r_2)){\rm d}r_2-16\lambda^2 r\int_r^\infty\mathtt{E}_1(4\lambda r_2){\rm d}r_2\\
&=8\lambda r\int_{4\lambda r}^\infty\mathtt{E}_1(u){\rm d}u-16\lambda^2 r \left(\frac{1}{4\lambda}\exp(-4\lambda r)-r\mathtt{E}_1(4r)\right)\\
&=8\lambda r\left(\exp(-4\lambda r)-4\lambda r\mathtt{E}_1(4\lambda r)\right)-4\lambda r\exp(-4\lambda r)+16\lambda^2r^2\mathtt{E}_1(4r)\\
&=4\lambda r\exp(-4\lambda r)-16\lambda^2r^2\mathtt{E}_1(4\lambda r).\numberthis\label{eq:I_3}
\end{align*}
Finally, adding \eqref{eq:I_1}, \eqref{eq:I_2} and \eqref{eq:I_3}, we get \eqref{eq:CDF_R_1D}.
\section{Solution of Integral in \eqref{eq:probability_belongs_to_cap_Li}}
\label{app:probability_belongs_to_cap_Li}
Let $a=\frac{d-1}{2}$ and $b=\frac{1}{2}$. From step (a) of \eqref{eq:probability_belongs_to_cap_Li} and using $I_x(a,b)=\frac{B_x(a,b)}{B(a,b)}$, we have
\begin{align*}
p&=\nu_R\int_{R}^{R+\epsilon} B_{{1-\frac{l^2}{(R+\epsilon)^2}}}\left(a,b\right) l^{d-1}{\rm d}l,
\end{align*}
where $\nu_R=\frac{{d((R+\epsilon)^d-R^d)^{-1}}}{2B\left(a,b\right)}$. 
We solve the above integral using integration by parts as follows.
Let $v=l^{d-1}$ and $u=B_{1-\frac{l^2}{(R+\epsilon)^2}}\left(\frac{d-1}{2},\frac{1}{2}\right)$. We have $$\frac{{\rm d}}{{\rm d}l}u=-\frac{2l}{(R+\epsilon)^2}\left(\frac{l^{2}}{(R+\epsilon)^{2)}}\right)^{b-1}\left(1-\frac{l^2}{(R+\epsilon)^2}\right)^{a-1},$$ and thus
\begin{align*}
p&=-\nu_R\frac{R^d}{d}B_{1-\frac{R^2}{(R+\epsilon)^2}}\left(a,b\right) + \nu_R\int_R^{R+\epsilon} \frac{l^d}{d} \frac{2l}{(R+\epsilon)^2}\frac{l^{2(b-1)}}{(R+\epsilon)^{2(b-1)}}\left(1-\frac{l^2}{(R+\epsilon)^2}\right)^{a-1}{\rm d}l.
\end{align*} 
Now, substituting $\frac{l^2}{(R+\epsilon)^2}=z$, we get
\begin{align*}
p&=-\nu_R\frac{R^d}{d}B_{1-\frac{R^2}{(R+\epsilon)^2}}\left(a,b\right)+\nu_R\frac{(R+\epsilon)^d}{d}\int_\frac{R^2}{(R+\epsilon)^2}^1z^{b+\frac{d}{2}-1}(1-z)^{a-1}{\rm d}z\\
&=-\nu_R\frac{R^d}{d}B_{1-\frac{R^2}{(R+\epsilon)^2}}\left(a,b\right)+ \nu_R\frac{(R+\epsilon)^d}{d}\left[B\left(b+\frac{d}{2}-1,a\right) - B_{\frac{R^2}{(R+\epsilon)^2}}\left(b+\frac{d}{2}-1,a\right)\right]\\
&=-\tilde{\nu}_R R^d I_{1-\frac{R^2}{(R+\epsilon)^2}}(a,b) +\tilde{\nu}_R(R+\epsilon)^d\frac{B\left(b+\frac{d}{2},a\right)}{B(a,b)}\left[1-I_{1-\frac{R^2}{(R+\epsilon)^2}}\left(b+\frac{d}{2},a\right)\right]\\
&=-\tilde{\nu}_R R^d I_{1-\frac{R^2}{(R+\epsilon)^2}}(a,b) +\tilde{\nu}_R(R+\epsilon)^d\frac{B\left(b+\frac{d}{2},a\right)}{B(a,b)}\left[1-I_{1-\frac{R^2}{(R+\epsilon)^2}}\left(b+\frac{d}{2},a\right)\right]\\
&=\tilde{\nu}_R(R+\epsilon)^d B_{1-\frac{R^2}{(R+\epsilon)^2}}\left(a,b+\frac{d}{2}\right)-\tilde{\nu}_R R^d I_{1-\frac{R^2}{(R+\epsilon)^2}}(a,b) 
\end{align*}
where $\tilde{\nu}_R=\frac{1}{2((R+\epsilon)^d-R^d)}$. The last equality follows using $I_{x}(a,b)=1-I_{1-x}(b,a)$ and $B(a,b)=B(b,a)$.


%
%
%
%

\end{document}